\numberwithin{equation}{section}
\newtheorem{theorem}{Theorem}[section]
\newtheorem{proposition}[theorem]{Proposition}
\newtheorem{lemma}[theorem]{Lemma}
\theoremstyle{definition}
\newtheorem{remark}[theorem]{Remark}
\newcommand{\va}{\varepsilon}
\newcommand{\ds}{\displaystyle}
\newcommand{\sik}{\sum_{i=1}^k}
\newcommand{\be}{\beta}
\newcommand{\al}{\alpha}
\def\r{\mathbb{R}}
\begin{document}

\title[On nonlinear fractional
Schr\"{o}dinger equations]
{Infinitely many positive solutions for  nonlinear  fractional
Schr\"{o}dinger equations}

 \author{Wei Long, \,\,Shuangjie Peng \,\,and \,\,Jing Yang}

 \address{School of Mathematics and Statistics, Central China Normal
University, Wuhan, 430079, P. R. China\newline
 \noindent\quad College of Mathematics
and Information Science, Jiangxi Normal University, Nanchang,
Jiangxi 330022, P. R. China }

\email{ hopelw@126.com}

\address{School of Mathematics and Statistics, Central China
Normal University, Wuhan, 430079, P. R. China }

\email{ sjpeng@mail.ccnu.edu.cn}

\address{School of Mathematics and Statistics, Central China
Normal University, Wuhan, 430079, P. R. China }

\email{ yyangecho@163.com}

\address{School of Mathematics and Statistics, Central China
Normal University, Wuhan, 430079, P. R. China }

\begin{abstract}

We consider the following nonlinear fractional Schr\"{o}dinger  equation
$$
(-\Delta)^su+u=K(|x|)u^p,\ \ u>0 \ \ \hbox{in}\ \  \r^N,
$$
where $K(|x|)$ is a positive radial function, $N\ge 2$, $0<s<1$,
$1<p<\frac{N+2s}{N-2s}$.  Under some asymptotic assumptions on $K(x)$ at infinity, we
show that this problem  has
infinitely many non-radial positive solutions, whose energy can be made arbitrarily large.

 { Key words }:  fractional Laplacian; nonlinear Schr\"{o}dinger equation; reduction
method.

{ AMS Subject Classifications:} 35J20, 35J60
\end{abstract}

\maketitle

\section{Introduction}

\ \ \ \ In this paper, we consider the following nonlinear fractional
Schr\"{o}dinger equation
\begin{equation} \label{eq}
(-\Delta)^su+u=K(x)u^p,\ \ u>0\ \ \hbox{in} \ \ \r^N,\ \ u \in\ H^s(\r^N)
\end{equation}
with dimension $N \geq 2$, where $K(x)$ is a positive continuous potential, $0<s<1$,  $1<p<2_*(s)-1,\,\,2_*(s)=\frac{2N}{N-2s}$ and
$$
H^s(\r^N):=\Big\{u\in L^2(\r^N): \frac{|u(x)-u(y)|}{|x-y|^{\frac{N}{2}+s}}\in L^2(\r^N\times\r^N)\Big\}.
$$
The fractional Laplacian of a function $f:\r^N\rightarrow \r$ is
expressed by the formular
\begin{equation} \label{1.3}
(-\Delta)^sf(x)=C_{N,s}P.V.\int_{\r^N}\frac{f(x)-f(y)}{|x-y|^{N+2s}}dy=C_{N,s}\lim_{\va\rightarrow
0}\int_{\r^N\setminus B_\va(x)}\frac{f(x)-f(y)}{|x-y|^{N+2s}}dy,
\end{equation}
where
$C_{N,s}$ is some normalization constant (See Sect. 2).

Problem \eqref{eq} arises from looking for standing waves
$\Psi(t,x)=\exp(iE t)u(x)$ for the following nonlinear
Schr\"{o}dinger equations
\begin{equation} \label{1.4}
i\frac{\partial \Psi}{\partial t}=(-\Delta)^s
\Psi+(1+E)\Psi-K(x)|\Psi|^{p-1}\Psi,\,\,(t,x)\in \r^+\times\r^N,
\end{equation}
where $i$ is the imaginary unit. This equation is of
particular interest in fractional quantum mechanics for the study of
particles on stochastic fields modelled by L\'{e}vy processes. A
path integral over the L\'{e}vy flights paths and a fractional
Schr\"{o}dinger equation of fractional quantum mechanics are
formulated by Laskin \cite{l1} from the idea of Feynman and Hibbs's
path integrals (see also \cite{l2}).

The L\'{e}vy processes occur widely in physics, chemistry and
biology. The stable L\'{e}vy processes that give rise to equations
with the fractional Laplacians have recently attracted much research
interest, and there are a lot of results in the literature on the
existence of such solutions, e.g.,
\cite{ct,cdls,fqt,sv,cs,abfs,w,mmt,ly,cgnt,clo} and the references
therein.

A partner problem of \eqref{eq} is the following scalar
field equation
\begin{equation} \label{1.5}
(-\Delta)^su+V(x)u=|u|^{p-1}u,\ \  \hbox{in} \ \ \r^N,\ \ u \in\ H^s(\r^N).
\end{equation}
In the sequel, we will assume that $V,K$ is bounded, and
$V(x)\geq V_0>0,$ $K(x)\geq K_0>0$.

It is known, but not completely trivial, that $(-\Delta)^s$ reduces to the standard Laplacian $-\Delta$ as $s\to 1$. When $s=1$, the classical nonlinear Schr\"{o}dinger equation has
been extensively studied in the last thirty  years. If
\begin{equation} \label{1.6}
\inf_{x\in \r^N} V(x)<\lim_{|x|\rightarrow \infty} V(x),\ \  (\hbox{or}\
\sup_{x\in \r^N}K(x)>\lim_{|x|\rightarrow \infty} K(x)),
\end{equation}
then, using the concentration compactness principle
\cite{pl1,pl2}, one can show that \eqref{eq} and \eqref{1.5} has a
least energy solution. See for example \cite{dn,pl1,pl2}. But if
\eqref{1.6} does not hold, \eqref{eq} or \eqref{1.5} may not have a
least energy solution. So, in this case, one naturally  needs to find solutions with higher
energy. Recently, Cerami et al. \cite{cds} showed that  problem~\eqref{1.5} with $s=1$
has infinitely many sign-changing solutions if $V(x)$ goes to its
limit at infinity from below at a suitable rate.  In \cite{wy1},
Wei and Yan gave a surprising  result which says that \eqref{eq} or \eqref{1.5} with $s=1$ and $V(x)$ or $K(x)$ being radial has solutions with
large number of bumps near infinity and the energy of this solutions
can be very large. This kind of results was generalized  by Ao and Wei in \cite{AW} very recently  to the  case in which $V(x)$ or $K(x)$ does not have any symmetry assumption. For more results on \eqref{eq} and \eqref{1.5} with $s=1$, we can refer to \cite{cds,cp,ds,dn} and the references therein.

When $0<s<1,$ Chen and Zheng \cite{cz} studied  the following singularly perturbed problem
\begin{equation} \label{1.7}
\varepsilon^{2s}(-\Delta)^su+V(x) u=|u|^{p-1}u,\ \ \hbox{in} \ \ \r^N.
\end{equation}
They showed that when $N=1,2,3,$ $\varepsilon$ is sufficiently small,
$\max\{\frac12,\frac n4\}<s<1$ and $V$ satisfies some smoothness and boundedness assumptions, equation
\eqref{1.7} has a nontrivial solution $u_{\varepsilon}$
concentrated  to some single point as $\varepsilon\rightarrow0.$ Very recently, in
\cite{dpw}, D\'{a}vila, del Pino and Wei generalized various existence results known for \eqref{1.7} with $s=1$ to the case of fractional Laplacian. For results which are not for singularly perturbed type of \eqref{eq} and \eqref{1.5} with $0<s<1$, the readers can refer to \cite{ct,fqt,se1,ta} and the references therein.

As far as we know, it seems that there is no result on the existence of multiple solutions of equation \eqref{eq}
which is not a singularly perturbed problem. The aim of
this paper is to obtain infinitely many non-radial positive
solutions for \eqref{eq} whose functional energy are very large, under some assumptions for
 $K(x)=K(|x|)>0$ near the infinity.

Let
\begin{equation}\label{1.7"}
\frac{N+2s}{N+2s+1}<m< N+2s.
\end{equation}

 We assume that $0<K(|x|)\in C(\r^N)$
satisfies the following condition at infinity
$$
(K):\,\,\, K(r)=K_0-\frac{a}{r^m}+O(\frac{1}{r^{m+\theta}}),\,\,\hbox{as}\,\,r\rightarrow +\infty,
$$
 for some $a>0,\,\,\theta>0$. Without loss of
generality, we may assume that $K_0=1$.

 Our main
result in this paper can be stated as follows
\begin{theorem}\label{th1}
 Suppose that $N\ge 2,\,\,0<s<1$,  $1<p<2_*(s)-1$. If $K(r)$ satisfies ($K$), then problem \eqref{eq} has
infinitely many non-radial positive solutions.
\end{theorem}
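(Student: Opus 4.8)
We argue by a finite-dimensional Lyapunov--Schmidt reduction, superposing a large number $k$ of translated copies of the ground state and then letting $k\to\infty$. Let $U\in H^s(\r^N)$ be the unique positive radial solution of the limit equation $(-\Delta)^sU+U=U^p$ in $\r^N$; it is nondegenerate, i.e. the kernel of $L_0:=(-\Delta)^s+1-pU^{p-1}$ on $H^s(\r^N)$ equals $\mathrm{span}\{\pa_{x_1}U,\dots,\pa_{x_N}U\}$, and $U(x)\sim c_0|x|^{-(N+2s)}$ as $|x|\to\infty$. This polynomial decay --- as opposed to the exponential decay when $s=1$ --- is what dictates the length scales below. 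For a large integer $k$ and $r>0$ put
\[
x_j=x_j(r)=\Bigl(r\cos\tfrac{2\pi(j-1)}{k},\ r\sin\tfrac{2\pi(j-1)}{k},\ 0,\dots,0\Bigr),\qquad j=1,\dots,k,
\]
and $W_{r,k}=\sum_{j=1}^{k}U(\cdot-x_j)$. We seek a solution of \eqref{eq} of the form $u=W_{r,k}+\phi$ with $\phi$ small and lying in the closed subspace $\mathcal H\subset H^s(\r^N)$ of functions that are even in each of $x_2,\dots,x_N$ and invariant under the rotation by $2\pi/k$ in the $(x_1,x_2)$-plane. This symmetry fuses the $N$ translational near-kernel directions $\pa_{x_l}U(\cdot-x_j)$ at each bump into the single direction $\pa W_{r,k}/\pa r$, so that the reduced problem is one-dimensional in $r$. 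Balancing the potential correction against the interaction of neighbouring bumps (see the expansion below) forces $r$ to be of order $\kappa_k:=k^{(N+2s)/(N+2s-m)}$, which is meaningful exactly because $m<N+2s$; we therefore restrict $r$ to an interval $[\,\ro_0\kappa_k,\ro_1\kappa_k\,]$ for suitable $0<\ro_0<\ro_1$.

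The reduction has two steps. \emph{(i) Auxiliary equation.} For each admissible $r$, projecting \eqref{eq} onto the orthogonal complement in $\mathcal H$ of $\mathrm{span}\{\pa W_{r,k}/\pa r\}$, we look for $\phi=\phi_{r,k}$ in that complement with
\[
(-\Delta)^s(W_{r,k}+\phi)+(W_{r,k}+\phi)-K(|x|)(W_{r,k}+\phi)_+^{p}\in\mathrm{span}\{\pa W_{r,k}/\pa r\}.
\]
The nondegeneracy of $U$ makes the associated linearized operator invertible on the complement with inverse bounded uniformly in $k$; the delicate point is to control the off-diagonal interactions between the $k$ bumps, for which one must estimate the tails $\sum_{j\neq1}U(x_1-x_j)\sim(k/r)^{N+2s}$ carefully, since they decay only polynomially. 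A contraction-mapping argument then gives a unique small $\phi_{r,k}$, depending in a $C^1$ way on $r$, with a bound roughly of the form $\|\phi_{r,k}\|\lesssim r^{-m}+(k/r)^{N+2s}$ (up to harmless powers of $k$). \emph{(ii) Reduced equation.} Let $I$ be the energy functional of \eqref{eq} and set $F_k(r):=I(W_{r,k}+\phi_{r,k})$. A standard argument shows that any critical point of $F_k$ in the open interval $(\ro_0\kappa_k,\ro_1\kappa_k)$ yields a genuine solution of \eqref{eq} in $\mathcal H$. One produces such a critical point by maximizing $F_k$ over the closed interval and verifying that the maximum is interior.

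The crucial computation is the energy expansion. Using condition $(K)$, the decay of $U$, the estimate on $\phi_{r,k}$, and the finiteness of $\int U^{p+1}$ and $\int U^{p}$, one gets, uniformly for $r$ in the admissible range,
\[
F_k(r)=k\Bigl(A+\frac{B_1}{r^m}-B_2\Bigl(\frac{k}{r}\Bigr)^{N+2s}+o\bigl(r^{-m}\bigr)\Bigr),
\]
where $A>0$ is the ground-state energy of the limit problem, $B_1=\tfrac{a}{p+1}\int U^{p+1}>0$ comes from the correction $K(|x|)-1\sim-a|x|^{-m}$ felt by each bump, and $-B_2(k/r)^{N+2s}$ with $B_2>0$ is the attractive interaction energy of the $k$ bumps, dominated by nearest neighbours at mutual distance $\sim r/k$. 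Since differentiating $r^{-m}$ in $r$ produces a power $r^{-m-1}$ with $m+1<N+2s+1$, the bracketed function first increases and then decreases across the admissible interval, hence attains an interior maximum at some $r=r_k$; this is the sought critical point. The corresponding $u_k=W_{r_k,k}+\phi_{r_k,k}$ is non-radial --- it concentrates at exactly $k$ points on a circle and is invariant only under the attached dihedral group --- it is positive by the strong maximum principle for $(-\Delta)^s$ together with $\|\phi_{r_k,k}\|\to0$, and $I(u_k)\sim kA\to\infty$; in particular the $u_k$ are pairwise distinct for large $k$, proving Theorem~\ref{th1}.

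The step I expect to be the genuine obstacle is the uniform-in-$k$ solvability and $C^1$ control of the auxiliary equation, together with showing that the remainder $o(r^{-m})$ in $F_k$ is truly negligible compared with the two competing terms $B_1r^{-m}$ and $B_2(k/r)^{N+2s}$, which are of the same order at $r=r_k$. The nonlocality of $(-\Delta)^s$ and the slow polynomial decay of $U$ make the interaction and remainder estimates substantially more delicate than for $s=1$, and it is precisely here that the two-sided restriction $\frac{N+2s}{N+2s+1}<m<N+2s$ enters: the upper bound makes the scale $\kappa_k$ well defined, while the lower bound guarantees that $\|\phi_{r,k}\|^2$ and the cross terms it produces stay of strictly lower order than $r^{-m}$ along $r\sim\kappa_k$. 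It is frequently convenient to carry out these estimates after passing to the Caffarelli--Silvestre $s$-harmonic extension, which replaces $(-\Delta)^s$ by a local degenerate-elliptic operator on $\r^{N+1}_+$ at the price of one more variable.
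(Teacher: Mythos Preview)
Your proposal is correct and follows essentially the same route as the paper: the same symmetric $k$-bump ansatz on a large circle of radius $r\sim k^{(N+2s)/(N+2s-m)}$, the same one-parameter Lyapunov--Schmidt reduction inside the dihedral-symmetric space, contraction for the auxiliary equation using nondegeneracy of $U$, and the same reduced energy $k\bigl(A+B_1 r^{-m}-B_0(k/r)^{N+2s}+\text{l.o.t.}\bigr)$ with an interior maximum giving the solution. The only minor discrepancy is the precise size of the correction: the paper obtains $\|\phi\|_s\le Ck^{1/2}\bigl((k/r)^{(N+2s)/2+\tau}+r^{-m/2-\tau}\bigr)$ rather than your rough $r^{-m}+(k/r)^{N+2s}$, but this does not affect the argument since only $\|\phi\|_s^2$ enters the remainder and both bounds make it of strictly lower order.
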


\begin{remark}The radial symmetry can be replaced by the following
weaker symmetry assumption: after suitably rotating the coordinated
system,

(i) $K(x)=K(x',x'')=K(|x'|,|x_3|,\cdots,|x_N|),$ where
$x=(x',x'')\in \r^2\times \r^{N-2}$,

(ii) $K(x)=K_0-\frac{a}{|x|^m}+O(\frac{1}{|x|^{m+\theta}})$ as
$|x|\rightarrow +\infty$, where $a>0,\
\theta>0$ and $K_0>0$ are some constants.
\end{remark}

\begin{remark}Using the same argument, we can prove that if
$$
V(x)=V(x',x'')=V(|x'|,|x_3|,\cdots,|x_N|)=V_0+\frac{a}{|x|^m}+O(\frac{1}{|x|^{m+\theta}}), \,\,\hbox{as}\,\,|x|\to\infty,
$$
for some constants  $V_0>0,$ $a>0,$ and $\theta>0$. Then problem~\eqref{1.5}
has infinitely many positive non-radial solutions.
\end{remark}

Before we close this introduction, let us outline the main idea in
the proof of Theorem \ref{th1}.

 We will use the unique ground state $U$  of
\begin{equation}\label{eq1}
 (-\Delta)^s u+ u=u^p,\ u>0,\,\,x \in \r^N,\,\,u(0)=\max\limits_{\r^N} u(x)
\end{equation}
to build up the approximate solutions for \eqref{eq}. It is well
known that when $s=1$, the ground state solution of \eqref{eq1}
decays exponentially at infinity. But from \cite{fl,fls}, we see that when $s\in(0,1)$, the
unique ground solution of \eqref{eq1} decays like
$\frac{1}{|x|^{N+2s}}$ when $|x|\rightarrow \infty.$

Let any integer $k>0$, define
$$
x^i=\big(r\cos\frac{2(i-1)\pi}{k},r\sin\frac{2(i-1)\pi}{k},0\big),\
\ i=1,\cdots,k,
$$
where $0$ is the zero vector in $\r^{N-2}$,
$r\in[r_0k^{\frac{N+2s}{N+2s-m}},r_1k^{\frac{N+2s}{N+2s-m}}]$ for some
$r_1>r_0>0.$

 Set $x=(x',x''),\ x'\in \r^2,\ x'' \in \r^{N-2}.$ Define
\begin{equation*} \begin{split}
\mathscr{H}=\Bigl\{u: &u\in H^s(\r^N), u\ \mbox{is even in}\ x_i, i=2,\cdots,N,\\
&u(r\cos\theta, r\sin\theta,x'')=
u(r\cos(\theta+\frac{2j\pi}{k}),r\sin(\theta+\frac{2j\pi}{k}),x''),\,j=1,\cdots,k-1\Bigr\}.\\
\end{split} \end{equation*}

Write $$U_r(x)=\sum_{i=1}^{k}U_{x^i}(x),$$ where
$U_{x^i}(x)=U(x-x^i).$

We will prove Theorem \ref{th1} by proving the following result
\begin{theorem}\label{th3}
Under the assumption of Theorem~\eqref{th1}, there is an
integer $k_0>0$, such that for any integer $k\geq k_0$, \eqref{eq}
has a solution $u_k$ of the form
$$
u_k=U_r(x)+\omega_r
$$
where $\omega_r\in \mathscr{H}$,
$r\in[r_0k^{\frac{N+2s}{N+2s-m}},r_1k^{\frac{N+2s}{N+2s-m}}]$ and as
$k\rightarrow +\infty$,
$$
\int_{\r^{2N}}\frac{|\omega_r(x)-\omega_r(y)|^2}{|x-y|^{N+2s}}+\int_{\r^N}\omega_r^2\rightarrow
0.
$$

\end{theorem}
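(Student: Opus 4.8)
The plan is to implement the standard Lyapunov–Schmidt reduction adapted to the fractional setting, using the Caffarelli–Silvestre extension to work with a local elliptic problem in $\r^{N+1}_+$. First I would fix $r$ in the admissible interval $[r_0k^{\frac{N+2s}{N+2s-m}},r_1k^{\frac{N+2s}{N+2s-m}}]$ and look for a solution of the form $u = U_r + \omega$ with $\omega$ in the symmetric space $\mathscr{H}$ and additionally orthogonal (in the natural $H^s$ inner product coming from the extension) to the approximate kernel spanned by the translates $\partial_r U_{x^i}$, $i=1,\dots,k$. Because of the $\mathbb{Z}_k$ symmetry built into $\mathscr{H}$, all $k$ of these orthogonality conditions collapse to a single scalar condition, so the reduced problem will be one-dimensional in the parameter $r$. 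The linearized operator $L_r$ obtained by linearizing $(-\Delta)^s + 1 - p K U_r^{p-1}$ around $U_r$ must be shown to be invertible on the orthogonal complement, uniformly in $k$ and in $r$; this uses the non-degeneracy of the ground state $U$ up to translations (from \cite{fl,fls}) together with the fact that the bumps $U_{x^i}$ are placed at mutual distance $\sim r/k \to\infty$, so the interactions are small. A Banach fixed point / contraction mapping argument in a suitable weighted norm then produces, for each $k\ge k_0$ and each admissible $r$, a unique small $\omega = \omega_r \in \mathscr{H}$ solving the projected equation, with the quantitative estimate
$$
\|\omega_r\|_{H^s(\r^N)}^2 = \int_{\r^{2N}}\frac{|\omega_r(x)-\omega_r(y)|^2}{|x-y|^{N+2s}}\,dx\,dy + \int_{\r^N}\omega_r^2 \le C\Big(\frac{1}{r^{m}}\Big)^{?}
$$
tending to $0$ as $k\to\infty$; the precise rate comes from estimating the error $E_r := (-\Delta)^s U_r + U_r - K U_r^p$ in the dual norm, where the $-a/r^m$ term in $(K)$ and the slow polynomial decay $U(x)\sim |x|^{-(N+2s)}$ both enter.

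Second, I would carry out the finite-dimensional reduction: substitute $u = U_r + \omega_r$ into the energy functional
$$
I(u) = \tfrac12\int_{\r^{2N}}\frac{|u(x)-u(y)|^2}{|x-y|^{N+2s}} + \tfrac12\int_{\r^N}u^2 - \tfrac{1}{p+1}\int_{\r^N}K(|x|)(u^+)^{p+1}
$$
and define $F(r) := I(U_r + \omega_r)$. A Lyapunov–Schmidt argument shows that critical points of $F$ on the interval correspond to genuine solutions of \eqref{eq}. The key computation is the expansion
$$
F(r) = k\Big(A + \frac{B}{r^m} - C\,\phi\big(\tfrac{r}{k}\big) + \text{(lower order)}\Big),
$$
where $A = I(U) > 0$ is the ground-state energy, the term $B/r^m$ with $B = a\,\frac{1}{p+1}\int U^{p+1} > 0$ comes from the potential perturbation in $(K)$, and $-C\,\phi(r/k)$ with $C>0$ is the (positive) interaction energy between consecutive bumps, which by the decay $U(x)\sim|x|^{-(N+2s)}$ behaves like $(k/r)^{N+2s}$ up to constants. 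With $r\sim k^{\frac{N+2s}{N+2s-m}}$ the two competing terms $r^{-m}$ and $(k/r)^{N+2s}$ are of the same order, so $F/k$ has a genuine interior local maximum (or minimum, depending on signs) in $r$; restricting to the compact interval and checking that the maximum is not attained at the endpoints produces the desired critical point $r_k$. Setting $u_k = U_{r_k} + \omega_{r_k}$ finishes the proof, and the stated convergence $\|\omega_{r_k}\|\to 0$ is exactly the estimate obtained in the first step.

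The main obstacle, and the part requiring the most care, is the fractional-specific analysis in the first step: establishing the uniform (in $k$) invertibility of the linearized operator $L_r$ and controlling the error term and the nonlinear remainder in the correct norms when the ground state decays only polynomially, $U(x)\sim|x|^{-(N+2s)}$, rather than exponentially as in the case $s=1$. Polynomial tails make the bump interactions, the cross terms $\int K U_{x^i}^{p-\ell} U_{x^j}^{\ell}$, and the convolution-type estimates considerably more delicate than in the classical setting, and one must choose the weighted $L^\infty$ or $H^s$ norm (with weight $\sum_i (1+|x-x^i|)^{-(N+2s)}$, possibly with a small loss $\tau>0$ in the exponent) so that both the error estimate and the contraction estimate close simultaneously; this is precisely where the restriction $\frac{N+2s}{N+2s+1}<m<N+2s$ in \eqref{1.7"} is used. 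The passage through the Caffarelli–Silvestre extension, needed to make $(-\Delta)^s$ local and to exploit the non-degeneracy results of \cite{fl,fls}, adds a layer of bookkeeping (trace inequalities, Pohozaev-type identities, decay of the extended functions) but no conceptual difficulty beyond the decay issue just described.
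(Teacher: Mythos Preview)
Your proposal is correct and follows essentially the same Lyapunov--Schmidt strategy as the paper: invertibility of the linearized operator via non-degeneracy of $U$, a contraction argument producing $\omega_r$, the energy expansion $F(r)=k\bigl(A-B_0(k/r)^{N+2s}+B_1/r^m+\text{l.o.t.}\bigr)$, and an interior maximum on the interval $S_k$.

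A few differences in execution are worth noting. First, the paper does \emph{not} use weighted $L^\infty$ norms at all; both the invertibility (Proposition~\ref{prop4.1}) and the contraction (Proposition~\ref{prop4.2}) are carried out directly in the unweighted $H^s$ norm, exploiting the symmetry to localize to a single sector $\Omega_1$ and a standard blow-up/compactness argument. The precise rate you left as ``$?$'' is $\|\omega_r\|_s\le Ck^{1/2}\bigl((k/r)^{\frac{N+2s}{2}+\tau}+r^{-\frac{m}{2}-\tau}\bigr)$, obtained by estimating the linear error $l_k$ in the $H^s$ dual norm (Lemma~\ref{lm4.1}); the restriction \eqref{1.7"} on $m$ enters exactly here, to make the interaction and potential errors compatible. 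Second, the Caffarelli--Silvestre extension is not used in the reduction or the energy expansion; the paper works intrinsically with the Gagliardo seminorm and invokes the extension only at the very end, to upgrade $u_k\ge 0$ to $u_k>0$ via the strong maximum principle. Your route through the extension would also work but adds bookkeeping that the paper avoids. Finally, the reduced problem is a maximum (not a minimum): $+B_1/r^m$ dominates for large $r$, $-B_0(k/r)^{N+2s}$ dominates for small $r$, and one checks the interior maximum by comparing $F_1$ at the endpoints of $S_k$ with its value at $\tilde r=(B_0(N+2s)/B_1m)^{1/(N+2s-m)}k^{\frac{N+2s}{N+2s-m}}$.
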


The idea of our proof is inspired by that of \cite{wy1} where
infinitely many positive non-radial solutions  to a nonlinear
Schr\"{o}dinger equations \eqref{1.4} with $s=1$ are obtained when the potential approaches
to a positive constant algebraically at infinity. We will use the well-known Lyapunov-Schmidt
reduction scheme to transfer our problem to a maximization problem  of a one-dimensional function in a suitable range. Compared with the operator $-\Delta$, which is local, the operator $(-\Delta)^s$ with $0<s<1$ on $\r^N$ is nonlocal. So it is expected that the standard techniques for $-\Delta$ do not work directly. In particular, when we try to find  spike solutions for \eqref{eq} with $0<s<1$, $(-\Delta)^s$ may kill bumps by averaging on the whole $\r^N$. For example, the ground state for \eqref{eq} with $0<s<1$   decays
algebraically at infinity, which is a contrast to the fact that the ground state for $-\Delta$ decays exponentially at infinity. This kind of property requires us to  establish some new basic estimates and give a precise estimate on  the energy of the approximate solutions.

This paper is organized as follows. In Sect.2, we will give some
preliminary properties related to the fractional Laplacian operator. In Sect.3, we will establish some preliminary estimate. We will carry out a
reduction procedure and study the reduced one dimensional problem to
prove Theorems ~\ref{th3} in Sect.4. In Appendix,
some basic estimates  and an energy expansion for the functional
corresponding to problem \eqref{eq} will be established.

 \section{Basic Theory on Fractional Laplacian Operator }
 \bigskip
In this section, we recall some properties of the fractional order
Sobolev space and the ground state solution $U$ of the limit
equation \eqref{eq1}.

Let $0<s<1$. Various definitions of the fractional Laplacian
$(-\Delta)^sf(x)$ of a function $f$ defined in $\r^N$ are available,
depending on its regularity and growth properties.

It can be defined as a pseudo-differential operator
$$
\widehat{(-\Delta)^s}f(\xi)=|\xi|^{2s}\widehat{f}(\xi),
$$
where\,\,\, $\widehat{}$\,\,\ is Fourier transform. When $f$ have some
sufficiently regular, the fractional Laplacian of a function
$f:\r^N\rightarrow \r$ is expressed by the formular
\begin{equation*}
(-\Delta)^sf(x)=C_{N,s}P.V.\int_{\r^N}\frac{f(x)-f(y)}{|x-y|^{N+2s}}dy=C_{N,s}\lim_{\va\rightarrow
0}\int_{\r^N\setminus B_\va(x)}\frac{f(x)-f(y)}{|x-y|^{N+2s}}dy,
\end{equation*}
where $C_{N,s}=\pi^{-(2s+N/2)}\frac{\Gamma(N/2+s)}{\Gamma(-s)}$. This integral makes sense
directly when $s<\frac12$ and $f\in C^{0,\al}(\r^N)$ with $\al>2s$,
or if $f\in C^{1,\al}(\r^N)$, $1+2\al>2s.$ It is well known that $(-\Delta)^s$ on $\r^N$ with $0<s<1$ is a  nonlocal operator. In the remarkable
  work of Caffarelli and Silvestre \cite{cs}, this nonlocal operator was expressed as a generalized Dirichlet-Neumann map for a certain elliptic boundary
    value problem with nonlocal differential operator defined on the upper half-space $\r_+^{N+1}:=\{(x,y):\,x\in\r^N,\,y>0\}$. That is, for a function $f:\r^N\rightarrow \r$, we consider the
extension $u(x,y):\r^N\times[0,+\infty)\rightarrow \r$ that satisfies the
equation
\begin{equation} \label{2.2}
u(x,0)=f(x)
\end{equation}
\begin{equation} \label{2.3}
\Delta_x u+\frac{1-2s}{y}u_y+u_{yy}=0.
\end{equation}
The equation \eqref{2.3} can also be written as
\begin{equation} \label{2.4}
div(y^{1-2s}\nabla u)=0,
\end{equation}
which is clearly the Euler-Lagarnge equation for the functional
$$
J(u)=\int_{y>0}|\nabla u|^2y^{1-2s}dxdy.
$$
Then, it follows from \cite{cs} that
$$
C(-\Delta)^sf=\lim_{y\rightarrow
0^+}-y^{1-2s}u_y=\frac{1}{2s}\lim_{y\rightarrow
0^+}\frac{u(x,y)-u(x,0)}{y^{2s}}.
$$

When $s\in (0,1)$, the space $H^s(\r^N)=W^{s,2}(\r^N)$ is defined by
\begin{eqnarray*}
H^s(\r^N)&=&\Big\{u\in
L^2(\r^N):\frac{|u(x)-u(y)|}{|x-y|^{\frac{N}{2}+s}}\in
L^2(\r^N\times\r^N)\Big\}\\
&=&\Big\{u\in
L^2(\r^N):\int_{\r^N}(1+|\xi|^{2s})|\widehat{u}(\xi)|^2<+\infty\Big\}
\end{eqnarray*}
and the norm is
$$
\|u\|_s:=\|u\|_{H^s(\r^N)}=\Big(\int_{\r^N}\int_{\r^N}\frac{|u(x)-u(y)|^2}{|x-y|^{N+2s}}dxdy+\int_{\r^N}|u|^2dx\Big)^{\frac12}.
$$
Here the term
$$
[u]_s:=\Big(\int_{\r^N}\int_{\r^N}\frac{|u(x)-u(y)|^2}{|x-y|^{N+2s}}dxdy\Big)^{\frac12}
$$
is the so-called Gagliardo (semi) norm of $u$. The following
identity yields the relation between the fractional operator
$(-\Delta)^s$ and the fractional Laplacian Sobolev space $H^s(\r^N)$,
$$
[u]_{H^s(\r^N)}=C\Big(\int_{\r^N}|\xi|^{2s}|\widehat{u}(\xi)|^2d\xi\Big)^{\frac12}=C\|(-\Delta)^{\frac{s}{2}}u\|_{L^2(\r^N)}
$$
for a suitable positive constant $C$ depending only $s$ and $N$.

Clearly, $\|\cdot\|_{H^s(\r^N)}$ is a Hilbertian norm induced by the
inner product
\begin{eqnarray*}
\langle
u,v\rangle_{H^s(\r^N)}&=&\int_{\r^N}\int_{\r^N}\frac{(u(x)-u(y))(v(x)-v(y))}{|x-y|^{N+2s}}dxdy+\int_{\r^N}u(x)v(x)dx\\
&=&\langle u,v\rangle_s+\langle u,v\rangle_{L^2(\r^N)}.
\end{eqnarray*}

On the Sobolev inequality and the compactness of embedding, one has
\begin{theorem}\label{th2.1}\cite{npv} The following imbeddings are continuous:

(1) $H^s(\r^N)\hookrightarrow L^q(\r^N)$, $2\leq q\leq
\frac{2N}{N-2s},$ if $N>2s,$

(2)$H^s(\r^N)\hookrightarrow L^q(\r^N)$, $2\leq q\leq +\infty,$ if $
N=2s,$

(3)$H^s(\r^N)\hookrightarrow C_b^j(\r^N)$, if $N<2(s-j)$.

 Moreover,  for any $R>0$ and $p\in [1,2_*(s))$ the embedding
 $H^s(B_R)\hookrightarrow\hookrightarrow L^p(B_R)$ is compact,
 where
 $$
 C_b^j(\r^N)=\Big\{u\in C^j(\r^N):\,D^Ku \,\text{is bounded  on}\,\ \r^N\,\text{for}\,|K|\leq j\Big\}.
 $$
\end{theorem}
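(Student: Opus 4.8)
The plan is to work entirely on the Fourier side, using the equivalence $\|u\|_{H^s(\r^N)}^2\asymp\int_{\r^N}(1+|\xi|^{2s})|\widehat u(\xi)|^2\,d\xi$ together with the identity $[u]_s=C\|(-\Delta)^{s/2}u\|_{L^2(\r^N)}$ recorded above, which reduces every continuous embedding to an $L^2$ estimate on $(-\Delta)^{s/2}u$. For part (1) the core step is the critical fractional Sobolev inequality $\|u\|_{L^{2_*(s)}(\r^N)}\le C[u]_s$ in the range $N>2s$, where $\tfrac{1}{2_*(s)}=\tfrac12-\tfrac sN$. I would obtain it by writing $u=I_s\big((-\Delta)^{s/2}u\big)$, where $I_s$ is the Riesz potential of order $s$, and applying the Hardy--Littlewood--Sobolev inequality with the exponent relation above to get $\|u\|_{L^{2_*(s)}}\le C\|(-\Delta)^{s/2}u\|_{L^2}=C'[u]_s$; the Caffarelli--Silvestre extension \eqref{2.3}--\eqref{2.4} and the attendant trace inequality give an alternative route. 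The remaining range $2\le q\le 2_*(s)$ then follows by Hölder interpolation between $L^2$ and $L^{2_*(s)}$, the $L^2$ endpoint being controlled directly by $\|u\|_{L^2}\le\|u\|_{H^s}$.

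For part (2), with $N=2s$ (so $N=1$, $s=\tfrac12$) the exponent $2_*(s)$ degenerates and the argument instead yields embedding into every $L^q$ with $q<\infty$. Here I would first note that for any finite $q_0$ one has $\widehat u\in L^{r}$ for a suitable $r$ by Hausdorff--Young, giving a subcritical bound $\|u\|_{L^{q_0}}\le C\|u\|_{H^s}$, and then reach a prescribed $q\in[2,\infty)$ by interpolating between $L^2$ and this $L^{q_0}$. For part (3), the constraint $N<2(s-j)$ with $0<s<1$ forces $j=0$, $N=1$, $s>\tfrac12$, and the point is that $\widehat u\in L^1(\r^N)$: by Cauchy--Schwarz, $\int|\widehat u|\le\big(\int(1+|\xi|^{2s})^{-1}\big)^{1/2}\big(\int(1+|\xi|^{2s})|\widehat u|^2\big)^{1/2}$, and the first factor is finite precisely because $2s>N$. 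Fourier inversion then gives $u\in C_b(\r^N)$ with $\|u\|_\infty\le C\|u\|_{H^s}$, continuity coming from dominated convergence applied to the inversion integral.

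For the compactness assertion, I would invoke the Fréchet--Kolmogorov (Riesz--Kolmogorov) criterion in $L^p(B_R)$. Given $\{u_n\}$ with $\|u_n\|_{H^s}\le M$, part (1) gives boundedness in $L^p(B_R)$ and uniform integrability is automatic on the bounded set $B_R$, so the only thing to check is uniform $L^p$-equicontinuity of translates. The natural estimate is first in $L^2$: from the Fourier representation and the elementary bound $|e^{ih\cdot\xi}-1|^2\le\min\{4,|h|^2|\xi|^2\}\le C|h|^{2s}|\xi|^{2s}$, one gets $\|u(\cdot+h)-u\|_{L^2(\r^N)}^2\le C|h|^{2s}[u]_s^2$. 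I would then upgrade this modulus of continuity to the $L^p$ norm for $p<2_*(s)$ by interpolating the $L^2$ translation bound against the uniform $L^{2_*(s)}$ bound supplied by part (1), after a cutoff to keep everything supported near $B_R$.

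The genuinely analytic obstacle is the critical inequality $\|u\|_{L^{2_*(s)}}\le C[u]_s$ in part (1): once it is available, parts (2)--(3) are interpolation and an elementary $L^1$ Fourier argument, and the compactness is routine Riesz--Kolmogorov. Within the compactness proof the delicate point is obtaining the translation (modulus-of-continuity) estimate in the correct $L^p$ norm uniformly in $n$; this is why I would route it through the clean $L^2$ bound coming from the Gagliardo seminorm and then interpolate, rather than attempt a direct $L^p$ estimate on the fractional difference quotients.
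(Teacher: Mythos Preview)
The paper does not prove Theorem~\ref{th2.1}; it is quoted verbatim as a known result from \cite{npv} (Di Nezza--Palatucci--Valdinoci), so there is no ``paper's own proof'' to compare against. Your sketch is a standard and correct route to these embedding and compactness facts: the critical inequality in part~(1) via $u=I_s\big((-\Delta)^{s/2}u\big)$ and Hardy--Littlewood--Sobolev is exactly how it is done in \cite{npv}, and the Fourier $L^1$ argument for part~(3) and the Riesz--Kolmogorov argument for compactness are the natural proofs.

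One small remark on part~(2): the statement as printed includes the endpoint $q=+\infty$, but your argument (correctly) stops at $q<\infty$. In the borderline case $N=2s$ one has $N=1$, $s=\tfrac12$, and $H^{1/2}(\r)$ does \emph{not} embed continuously into $L^\infty(\r)$; the correct borderline target is BMO or an exponential Orlicz class. So the $q=+\infty$ endpoint in the quoted statement is actually inaccurate, and your restraint here is justified rather than a gap.
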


Now,  we recall some known results for the limit
equation \eqref{eq1}.
  If $s=1$, the uniqueness and non-degeneracy of the ground state $U$ for \eqref{eq1} is due to \cite{k}. In the celebrated paper
\cite{fl}, Frank and Lenzemann proved the uniqueness of ground state
solution $U(x)=U(|x|)\geq 0$ for $N=1, 0<s<1,$ $1<p<2_*(s)-1$. Very
recently, Frank, Lenzemann and Silvestre \cite{fls} obtained the
non-degeneracy of ground state solutions for \eqref{eq1} in arbitrary
dimension $N\geq1$ and any admissible exponent $1<p<2_*(s)-1$.

  For convenience, we summarize the properties of the ground state $U$  of \eqref{eq1} which can be found in \cite{fl,fls}.
\begin{theorem}\label{th2.2}Let $N\geq1,\ s\in (0,1)$ and
$1<p<2_*(s)-1$. Then the following hold.

(i) (Uniqueness)  The ground state solution $U\in H^s(\r^N)$ for
equation \eqref{eq1} is unique.

(ii)(Symmetry, regularity and decay)  $U(x)$ is radial, positive and
strictly decreasing in $|x|$. Moreover, the function $U$ belongs
to $H^{2s+1(\r^N)}\cap C^{\infty}(\r^N)$ and satisfies
$$
\frac{C_1}{1+|x|^{N+2s}}\leq U(x) \leq \frac{C_2}{1+|x|^{N+2s}}\ \ \
for\ \ x\in\r^N,
$$
with some constants $C_2\geq C_1>0.$

(iii) (Non-degeneracy) The linearized operator
$L_0=(-\Delta)^s+1-p|U|^{p-1}$ is non-degenerate, i.e., its kernel
is given by
$$
kerL_0=span\{\partial_{x_1}U,\partial_{x_2}U,\cdots
\partial_{x_N}U\}.
$$
\end{theorem}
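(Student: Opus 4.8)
The plan is to prove the three assertions in the order (ii), (iii), (i), establishing symmetry and decay first, then non-degeneracy, and finally reserving uniqueness for last since it will be driven by the non-degeneracy of (iii). The unifying device throughout would be the Caffarelli--Silvestre extension \eqref{2.3}--\eqref{2.4}, which replaces the nonlocal equation \eqref{eq1} by the degenerate-elliptic \emph{local} problem $\mathrm{div}(y^{1-2s}\nabla u)=0$ on the half-space with Neumann datum $-\lim_{y\to0^+}y^{1-2s}u_y \propto U^p-U$. This localization is what makes the usual PDE toolbox (maximum principles, Hopf lemma, Schauder estimates, Perron--Frobenius positivity) available for an operator that is otherwise global on $\r^N$.

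For part (ii), I would obtain radial symmetry and strict monotonicity by the method of moving planes applied to the extended solution $u(x,y)$: the weight $y^{1-2s}$ is even and the reflected difference satisfies a linear inequality for $\mathrm{div}(y^{1-2s}\nabla\,\cdot\,)$, so the maximum principle plus a Hopf-type boundary estimate forces reflection symmetry in every $x$-direction and $U'(r)<0$. Regularity is then a bootstrap: because $1<p<2_*(s)-1$, the embeddings of Theorem~\ref{th2.1} put $U$ in every $L^q$, and interior Schauder estimates for the extension upgrade this to $U\in C^\infty(\r^N)$. The decay is the genuinely fractional point and the part I would treat most carefully. Writing $U=\mathcal{G}\ast U^p$ with $\mathcal{G}$ the kernel of $(-\Delta)^s+1$, the key input is that $\mathcal{G}(x)\sim c_{N,s}|x|^{-(N+2s)}$ at infinity (in sharp contrast to the exponential decay for $s=1$); since $U^p$ is integrable and decays faster, a convolution estimate traps $U(x)$ between two multiples of $|x|^{-(N+2s)}$, giving the two-sided bound.

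For the non-degeneracy (iii), I would decompose $L^2(\r^N)$ into spherical-harmonic sectors. Since $(-\Delta)^s$ and the radial potential $pU^{p-1}$ both commute with rotations, $L_0$ splits as a direct sum of one-dimensional radial operators $L_0^{(\ell)}$ indexed by angular momentum $\ell=0,1,2,\dots$. Differentiating \eqref{eq1} in $x_j$ shows each $\pa_{x_j}U=U'(r)x_j/r$ lies in $\ker L_0$ and belongs to the $\ell=1$ sector, so the whole content is to prove $\ker L_0^{(0)}=\{0\}$, $\ker L_0^{(1)}=\mathrm{span}\{U'\}$, and $L_0^{(\ell)}>0$ for $\ell\ge2$. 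The last follows from monotonicity of the sectors in $\ell$ together with a Perron--Frobenius argument on the extension (the nodeless $U'$ is the principal eigenfunction of the $\ell=1$ block, so higher sectors are strictly positive). The $\ell=0$ statement is tied to the strict variational minimality of the ground state. I expect the $\ell=1$ analysis to be the subtle step: one must rule out any second kernel element beyond $U'$ using a Sturm-type oscillation/comparison argument adapted to the nonlocal operator, where the standard ODE Wronskian identities are no longer available.

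Finally, for uniqueness (i), the classical $s=1$ shooting/ODE argument fails because of nonlocality, so I would run a continuation argument in the parameter $s$. Uniqueness holds at $s=1$ by Kwong \cite{k}, and as $s\to1^-$ the operator $(-\Delta)^s$ converges to $-\Delta$; using the non-degeneracy from (iii) and the implicit function theorem, the branch of radial ground states continues uniquely and cannot bifurcate, so a connectedness (open–closed) argument on the set of admissible $s$ propagates uniqueness across all of $(0,1)$. \textbf{The hard part} will be keeping this continuation closed: one needs a priori bounds and non-degeneracy that are \emph{uniform} along the branch, so that no two ground states can collide and no degeneration can occur at an interior value of $s$. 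This is precisely where the delicate uniform spectral estimates of \cite{fl,fls} are indispensable, and it is the genuine crux of the theorem.
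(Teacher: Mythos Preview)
Your sketch is a faithful outline of how the results in \cite{fl,fls} are actually established: moving planes on the Caffarelli--Silvestre extension for symmetry, the $|x|^{-(N+2s)}$ asymptotics of the resolvent kernel of $(-\Delta)^s+1$ for the two-sided decay, a spherical-harmonic splitting of $L_0$ with a Perron--Frobenius argument on the $\ell=1$ sector for non-degeneracy, and a continuation-in-$s$ argument anchored at Kwong's $s=1$ uniqueness to close part~(i). There is no substantive gap in the strategy.

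The point of comparison you should be aware of, however, is that \emph{the present paper does not prove Theorem~\ref{th2.2} at all}. It is stated purely as background, with the explicit preface ``we summarize the properties of the ground state $U$ \ldots\ which can be found in \cite{fl,fls}''. The authors take uniqueness, symmetry, the polynomial decay, and non-degeneracy as black boxes imported from Frank--Lenzmann and Frank--Lenzmann--Silvestre, and then use them (especially the decay rate $U(x)\asymp|x|^{-(N+2s)}$ and the kernel description $\ker L_0=\mathrm{span}\{\partial_{x_j}U\}$) as inputs to their Lyapunov--Schmidt reduction. So your proposal is not a different route from the paper's proof; it is a sketch of the proofs in the cited references, which the paper deliberately omits.
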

 By Lemma C.2 of \cite{fls}, it holds
that, for $j=1,\cdots,N,$ $\partial_{x_j}U$ has the
following decay estimate,
$$
|\partial_{x_j}U|\leq \frac{C}{1+|x|^{N+2s}}.
$$
From Theorem \ref{th2.2}, the ground bound state solution like
$\frac{1}{|x|^{N+2s}}$ when $|x|\rightarrow +\infty.$ Fortunately,
this polynomial decay is enough for us in the
estimates of our proof.
 \section{Some Preliminaries}
Let
$$
Z_i=\frac{\partial U_{x^i}}{\partial r},\ \ \ i=1,\cdots,k,
$$
where $x^i=(r\cos\frac{2(i-1)\pi}{k},r\sin\frac{2(i-1)\pi}{k},0)$ and
$$
r\in[r_0k^{\frac{N+2s}{N+2s-m}},r_1k^{\frac{N+2s}{N+2s-m}}]
$$
for some
$r_1>r_0>0.$

Define
$$
E=\Big\{v\in \mathscr{H}: \sik\int_{\r^N}U_{x^i}^{p-1}Z_iv=0\Big\}.
$$
The norm of $H^s(\r^N)$ is defined by:
$$
\|v\|_s=\sqrt{\langle v,v\rangle},\ \ \ v\in H^s(\r^N),
$$
where
\begin{eqnarray*}
\langle v_1,v_2\rangle&=&\langle
v_1,v_2\rangle_s+\int_{\r^N}v_1v_2\\
&=&\int_{\r^N}\int_{\r^N}\frac{(u(x)-u(y))(v(x)-v(y))}
{|x-y|^{N+2s}}dxdy+\int_{\r^N}u(x)v(x)dx.
\end{eqnarray*}

 The variational functional corresponding to \eqref{eq} is
\begin{eqnarray*}
I(u)= \frac{1}{2}\langle
u,u\rangle_s+\frac{1}{2}\int_{\r^N}u^2-\frac{1}{p+1}\int_{\r^N}K(x)|u|^{p+1}.
\end{eqnarray*}
Let
$$
J(\varphi)=I(U_r+\varphi)=I\Big(\sik U_{x^i}+\varphi\Big),\ \
\varphi\in E.
$$
Expand $J(\varphi)$ as follows:
\begin{equation}\label{expand}
J(\varphi)=J(0)+l(\varphi)+\frac{1}{2}\langle
L(\varphi),\varphi\rangle+R(\varphi),\ \ \ \varphi\in E,
\end{equation}
where
\begin{equation*}
l(\varphi)=\int_{\r^N}\sik U_{x^i}^p\varphi-\int_{\r^N}K(x)\Big(\sik
U_{x^i}\Big)^{p}\varphi,
\end{equation*}

\begin{equation*}
\langle
L(\varphi),\varphi\rangle=\langle\varphi,\varphi\rangle_s+\int_{\r^N}(\varphi^2-pK(x)U_r^{p-1}\varphi^2)
\end{equation*}
and
\begin{equation*}
R(\varphi)=-\frac{1}{p+1}\int_{\r^N}K(x)\Big((U_r+\varphi)^{p+1}-U_r^{p+1}-(p+1)U_r^p\varphi-\frac{1}{2}(p+1)pU_r^{p-1}\varphi^2\Big).
\end{equation*}
In order to find a critical point for $J(\varphi)$, we need to
discuss each term in the expansion \eqref{expand}.

\begin{lemma}\label{lm3.2}There is a constant $C>0$ independent of $k$ such
that
$$
\|R'(\varphi)\|\leq C \|\varphi\|_s^{\min\{p,2\}},
$$
$$
\|R''(\varphi)\|\leq C \|\varphi\|_s^{\min\{p-1,1\}}.
$$
\end{lemma}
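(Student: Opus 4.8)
The plan is to estimate the nonlinear remainder $R(\varphi)$ and its derivatives directly from the pointwise algebraic inequalities for the function $t\mapsto t^{p+1}$, combined with H\"older and the fractional Sobolev embedding of Theorem~\ref{th2.1}. Recall
$$
R(\varphi)=-\frac{1}{p+1}\int_{\r^N}K(x)\Big((U_r+\varphi)^{p+1}-U_r^{p+1}-(p+1)U_r^p\varphi-\tfrac{p(p+1)}{2}U_r^{p-1}\varphi^2\Big),
$$
so that, differentiating in $\varphi$,
$$
\langle R'(\varphi),\psi\rangle=-\int_{\r^N}K(x)\Big((U_r+\varphi)^p-U_r^p-pU_r^{p-1}\varphi\Big)\psi
$$
and
$$
\langle R''(\varphi)\psi,\psi\rangle=-p\int_{\r^N}K(x)\Big((U_r+\varphi)^{p-1}-U_r^{p-1}\Big)\psi^2.
$$
Since $K$ is bounded, the task is purely to bound these integrals by powers of $\|\varphi\|_s$.

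First I would invoke the elementary inequalities: for $a\ge 0$ and $b\in\r$ with $a+b\ge 0$,
$$
\big|(a+b)^p-a^p-pa^{p-1}b\big|\le C\big(a^{p-2}b^2\chi_{\{p\ge2\}}+|b|^p\chi_{\{1<p<2\}}\big),
\qquad
\big|(a+b)^{p-1}-a^{p-1}\big|\le C\big(a^{p-2}|b|\chi_{\{p\ge2\}}+|b|^{p-1}\chi_{\{1<p<2\}}\big).
$$
(One must be slightly careful because $U_r+\varphi$ need not be nonnegative; but $|(a+b)^p|$ is interpreted as $|a+b|^p$ throughout the energy, and the same inequalities hold after replacing $b$ by $|b|$ on the right-hand side — this is the standard convexity estimate.) In the case $1<p<2$ these give immediately, by H\"older with exponents $\frac{p+1}{p}$ and $p+1$,
$$
|\langle R'(\varphi),\psi\rangle|\le C\int_{\r^N}|\varphi|^p|\psi|\le C\|\varphi\|_{L^{p+1}}^p\|\psi\|_{L^{p+1}}\le C\|\varphi\|_s^p\|\psi\|_s,
$$
using $H^s(\r^N)\hookrightarrow L^{p+1}(\r^N)$ (valid since $1<p<2_*(s)-1$), and similarly $|\langle R''(\varphi)\psi,\psi\rangle|\le C\|\varphi\|_s^{p-1}\|\psi\|_s^2$. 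In the case $p\ge2$ the extra factor $U_r^{p-2}$ is bounded (since $U$, hence $U_r$, is bounded by Theorem~\ref{th2.2}), so the same H\"older argument with exponents chosen as $(\frac32,3,3)$ or more precisely interpolated between $2$ and $2_*(s)$ yields $|\langle R'(\varphi),\psi\rangle|\le C\|\varphi\|_s^2\|\psi\|_s$ and $|\langle R''(\varphi)\psi,\psi\rangle|\le C\|\varphi\|_s\|\psi\|_s^2$; here one uses that $2(p-1)< 2_*(s)$ need not hold in general, so instead one keeps the bounded factor $U_r^{p-2}$ out front and only needs $\varphi^2\psi$ integrable, i.e. $L^3$-type bounds, which follow from the embedding whenever $3\le 2_*(s)$, and in the remaining low-dimensional range from the $L^\infty$ or $L^q$ ($q\le\infty$) embeddings in parts (2)--(3) of Theorem~\ref{th2.1}. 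Taking the supremum over $\|\psi\|_s\le1$ gives the two claimed bounds with the exponents $\min\{p,2\}$ and $\min\{p-1,1\}$.

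The main obstacle — really the only subtlety — is handling the case $p\ge 2$ in dimensions where $2_*(s)$ is small, so that $\varphi^2\psi$ or $U_r^{p-2}\varphi^2\psi$ may not be controlled by a single application of the Sobolev embedding to the $L^{p+1}$ norm. The resolution is to never let a \emph{power} of $\varphi$ higher than $2$ (for $R'$) or $1$ (for $R''$) appear without an accompanying bounded factor: write $(U_r+\varphi)^{p}-U_r^{p}-pU_r^{p-1}\varphi=\int_0^1\int_0^1 p(p-1)(U_r+t\sigma\varphi)^{p-2}\,\varphi^2\,\sigma\,dt\,d\sigma$ when $p\ge2$, bound $(U_r+t\sigma\varphi)^{p-2}$ by $C(U_r^{p-2}+|\varphi|^{p-2})$, and observe $U_r^{p-2}\le C$; the term $U_r^{p-2}\varphi^2\psi$ is then estimated by H\"older in $L^{p+1}$ as above, and the term $|\varphi|^{p-2}\varphi^2\psi=|\varphi|^p\psi$ is estimated exactly as in the subquadratic case, giving the \emph{smaller} exponent, which is still $\ge\min\{p,2\}=2$. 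An entirely parallel decomposition handles $R''$. All constants are independent of $k$ because the Sobolev constant and $\sup_{\r^N}U_r\le k\sup_{\r^N}U$ — wait, that last bound is not uniform; instead one uses $\sup_{\r^N}U_r\le C$ which follows since the bumps $U_{x^i}$ are centered at mutually distant points $x^i$ with $|x^i-x^j|\to\infty$, so $U_r(x)\le\sum_i U(x-x^i)\le C\sum_i (1+|x-x^i|)^{-(N+2s)}\le C$ uniformly. With this uniform bound on $U_r$ in hand, every constant above is $k$-independent, completing the proof.
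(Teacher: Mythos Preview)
Your proof is correct and follows essentially the same route as the paper: compute $R'$ and $R''$ explicitly, apply the elementary Taylor-type inequalities for $t\mapsto t^p$ and $t\mapsto t^{p-1}$, then H\"older together with the Sobolev embedding $H^s\hookrightarrow L^{p+1}$, using the uniform bound $\sup_{\r^N}U_r\le C$ (which the paper records as Lemma~\ref{lmA.2}).

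One simplification you missed: your worry about the $L^3$ embedding in the case $p\ge 2$ is unnecessary. If $p\ge 2$ is admissible at all, then $3\le p+1<2_*(s)$, so $H^s\hookrightarrow L^3$ holds automatically; there is no ``remaining low-dimensional range'' to treat separately. The paper exploits this and simply writes, for $p>2$,
\[
|R''(\varphi)(\psi,\xi)|\le C\int_{\r^N}U_r^{p-2}|\varphi||\psi||\xi|\le C\|\varphi\|_{L^3}\|\psi\|_{L^3}\|\xi\|_{L^3}\le C\|\varphi\|_s\|\psi\|_s\|\xi\|_s,
\]
and for $R'$ uses H\"older with exponents $\tfrac{p+1}{p},\,p+1$ and the observation $2<\tfrac{2(p+1)}{p}<p+1$, rather than your integral-remainder decomposition. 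Both arguments are equivalent; the paper's is just a little more direct. (Note also that the paper, like you, tacitly drops the cross term $|\varphi|^p$ in the $p>2$ case --- the pointwise bound $|(a+b)^p-a^p-pa^{p-1}b|\le Ca^{p-2}b^2$ is only literally valid for $|b|\lesssim a$ --- but since the lemma is applied only to small $\varphi$ in the contraction argument of Proposition~\ref{prop4.2}, this is harmless.)
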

\begin{proof}
By direct calculation, we know that
$$
\langle
R'(\varphi),\psi\rangle=-\int_{\r^N}K(x)\Big((U_r+\varphi)^p-U_r^p-pU_r^{p-1}\varphi\Big)\psi,
$$
$$
\langle
R''(\varphi)(\psi,\xi)\rangle=-p\int_{\r^N}K(x)\Big((U_r+\varphi)^{p-1}-U_r^{p-1}\Big)\psi\xi.
$$

Firstly, we deal with the case $p>2.$ Since
$$
|\langle R'(\varphi),\psi\rangle|\leq
C\int_{\r^N}U_r^{p-2}|\varphi|^2|\psi|\leq
C\Big(\int_{\r^n}(U_r^{p-2}|\varphi|^2)^{\frac{p+1}{p}}\Big)^{\frac{p}{p+1}}\|\psi\|_s,
$$
we find
$$
\|R'(\varphi)\|\leq
C\Big(\int_{\r^n}(U_r^{p-2}|\varphi|^2)^{\frac{p+1}{p}}\Big)^{\frac{p}{p+1}}.
$$
On the other hand, it follows from Lemma \ref{lmA.2} that $U_r$ is
bounded. Since $2<\frac{2(p+1)}{p}<p+1,$ we obtain
$$
\|R'(\varphi)\|\leq
C\Big(\int_{\r^n}|\varphi|^{\frac{2(p+1)}{p}}\Big)^{\frac{p}{p+1}}\leq
C\|\varphi\|_s^2.
$$
For the estimate of $\|R''(\varphi)\|$, we have
\begin{eqnarray*}
|R''(\varphi)(\psi,\xi)|&\leq&
C\int_{\r^N}U_r^{p-2}|\varphi||\psi||\xi|\\
&\leq&C\int_{\r^N}|\varphi||\psi||\xi|\leq
C\Big(\int_{\r^N}|\varphi|^3\Big)^{\frac{1}{3}}\Big(\int_{\r^N}|\psi|^3\Big)^{\frac{1}{3}}\Big(\int_{\r^N}|\xi|^3\Big)^{\frac{1}{3}}\\
&\leq&C\|\varphi\|_s\|\psi\|_s\|\xi\|_s.
\end{eqnarray*}
So,
$$
\|R''(\varphi)\|\leq C\|\varphi\|_s.
$$
Using  the same argument, for the case $1<p\leq 2$,  we also can obtain that
$$
\langle R'(\varphi),\psi\rangle\leq C\int_{\r^N}\varphi^p\psi\leq
C\|\varphi\|_s^p\|\psi\|_s,
$$
$$
\langle R'(\varphi)(\psi,\xi)\rangle\leq
C\int_{\r^N}\varphi^{p-1}\psi \xi \leq
C\|\varphi\|_s^{p-1}\|\psi\|_s\|\xi\|_s.
$$
\end{proof}
\section{The Finite-Dimensional reduction and proof of the main results}
In this section, we intend to prove the main theorem by the
Lyapunov-Schmidt reduction.

Associated to the quadratic form  $L(\varphi)$, we define $L$ to be
a bounded linear map from $E$ to $E$, such that
$$
\langle Lv_1,v_2\rangle=\langle
v_1,v_2\rangle_s+\int_{\r^N}\Big(v_1v_2-pK(x)U_r^{p-1}v_1v_2\Big),\
\ \ \ v_1,v_2\in E.
$$
In this paper, we always assume
\begin{equation}
r\in
S_k:=\Big[\Big(\Big(\frac{B_0(N+2s)}{B_1m}-\alpha\Big)^{\frac{1}{N+2s-m}}\Big)k^{\frac{N+2s}{N+2s-m}},
\Big(\Big(\frac{B_0(N+2s)}{B_1m}+\alpha\Big)^{\frac{1}{N+2s-m}}\Big)
k^{\frac{N+2s}{N+2s-m}}\Big],
\end{equation}
where $\alpha>0$ is a small constant, $B_0$ and $B_1$ are defined in
Proposition \ref{propA.1}.

 Next, we show the invertibility of $L$ in $E$.

\begin{proposition}\label{prop4.1} There exists an integer  $k_0>0$, such that for $k\ge k_0$, there is a constant $\rho>0$
independent of $k,$ satisfying that  for any $r\in S_k,$
$$
\|Lu\|\geq \rho\|u\|_s,\ \ \ u\in E.
$$
\end{proposition}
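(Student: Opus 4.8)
The plan is to argue by contradiction in the standard way for Lyapunov--Schmidt reductions. Suppose the estimate fails: then there exist sequences $k\to\infty$, $r=r_k\in S_k$, and $u=u_k\in E$ with $\|u_k\|_s=1$ but $\|Lu_k\|\to 0$. By the definition of $L$, this means that for every $\psi\in E$,
\[
\langle u_k,\psi\rangle_s+\int_{\r^N}u_k\psi-p\int_{\r^N}K(x)U_r^{p-1}u_k\psi=o(1)\|\psi\|_s,
\]
so in particular, taking $\psi=u_k$, we get $1-p\int_{\r^N}K(x)U_r^{p-1}u_k^2\to 0$, hence $\int_{\r^N}K(x)U_r^{p-1}u_k^2\to \frac1p>0$. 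The key point is then to show that this quantity must instead tend to $0$, which gives the contradiction.

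The main work is localizing the mass of $u_k$ near the bumps and using the non-degeneracy of the linearized operator $L_0=(-\Delta)^s+1-pU^{p-1}$ from Theorem~\ref{th2.2}(iii). First I would note that, by the symmetry in $\mathscr H$ and the fact that the bumps $x^i$ are mutually far apart (their separation is comparable to $r\sin(\pi/k)\sim k^{\frac{N+2s}{N+2s-m}-1}\to\infty$), it suffices by periodicity to estimate $\int_{\Omega_1}K(x)U_r^{p-1}u_k^2$, where $\Omega_1$ is the natural ``sector'' containing $x^1$. Using the decay $U(x)\le C/(1+|x|^{N+2s})$ and $|\partial_{x_j}U|\le C/(1+|x|^{N+2s})$ from Theorem~\ref{th2.2}, together with the standard estimate $U_r^{p-1}\le C\sum_i U_{x^i}^{p-1}$ on the relevant range (here one must be a little careful because $p-1$ may be less than $1$; but the algebraic decay is summable after raising to an appropriate power, exactly as the range \eqref{1.7"} for $m$ is designed to ensure), one reduces to estimating $\int K(x)U_{x^1}^{p-1}u_k^2$ with the cross terms from the other bumps being $o(1)$. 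Translating by $x^1$, set $\tilde u_k(\cdot)=u_k(\cdot+x^1)$; then $\tilde u_k$ is bounded in $H^s(\r^N)$, so up to a subsequence $\tilde u_k\rightharpoonup u_\infty$ weakly in $H^s(\r^N)$ and strongly in $L^2_{loc}$ and (by Theorem~\ref{th2.1}) in $L^{p+1}_{loc}$. Passing to the limit in the equation — here one uses that $K(x^1+\cdot)\to K_0=1$ locally uniformly since $|x^1|=r\to\infty$, and that $U^{p-1}$ decays — one finds that $u_\infty$ solves $L_0 u_\infty=0$, i.e. $u_\infty\in\mathrm{span}\{\partial_{x_1}U,\dots,\partial_{x_N}U\}$. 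The symmetry/evenness constraints built into $\mathscr H$ force $u_\infty$ to be a multiple of $Z=\partial_r U$ (the radial-direction derivative at $x^1$), while the orthogonality constraint defining $E$, namely $\sum_i\int U_{x^i}^{p-1}Z_iu_k=0$, passes to the limit to give $\int U^{p-1}Z u_\infty=0$; since $\int U^{p-1}Z\cdot Z=\frac1p\int (Z')\cdots\neq 0$ (a nonzero constant, by non-degeneracy), we conclude $u_\infty\equiv 0$. Then $\int_{\r^N}K(x)U_{x^1}^{p-1}u_k^2\to \int_{\r^N}U^{p-1}u_\infty^2=0$ by the decay of $U^{p-1}$ (dominated convergence, splitting into a large ball where $\tilde u_k\to 0$ in $L^{p+1}$ and the exterior where $U^{p-1}$ is small). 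Summing over $i$ and using periodicity, $\int_{\r^N}K(x)U_r^{p-1}u_k^2\to 0$, contradicting the earlier lower bound $\frac1p$.

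The hard part is making the bump-localization and the ``no escaping mass'' argument rigorous in the nonlocal setting: because $(-\Delta)^s$ is nonlocal, cutting off $u_k$ to a single sector $\Omega_1$ is not free — one picks up commutator-type error terms of the form $\int\int \frac{(u_k(x)-u_k(y))(\chi(x)-\chi(y))}{|x-y|^{N+2s}}\cdots$ from the tails — and one must show these are $o(1)$, which again relies on the large separation of the bumps and the algebraic tail estimates for $U$ collected in the Appendix (Lemma~\ref{lmA.2} and Proposition~\ref{propA.1}). A secondary technical point is the case $1<p\le 2$, where $U_r^{p-1}$ is only Hölder, so the elementary inequality $\big(\sum_i U_{x^i}\big)^{p-1}\le \sum_i U_{x^i}^{p-1}$ is used in place of a mean-value estimate; this is standard but must be invoked. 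Once these estimates are in place, the contradiction argument above closes the proof, and the threshold $k\ge k_0$ simply absorbs the finitely many values of $k$ for which the separation is not yet large enough.
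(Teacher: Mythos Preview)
Your overall strategy --- argue by contradiction, translate by $x^1$, take a weak limit, invoke the non-degeneracy of $L_0$ to force the limit to vanish, then contradict a lower bound on $\int K(x)U_r^{p-1}u_k^2$ --- is exactly the paper's. But your normalization $\|u_k\|_s=1$ is the wrong one for a problem with $k\to\infty$ bumps, and this creates a genuine gap at the ``summing over $i$'' step.

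With $\|u_k\|_s=1$ and the $k$-fold symmetry built into $\mathscr H$, the contribution of each sector $\Omega_j$ to the (squared) norm is exactly $1/k$. In particular $\int_{B_R(x^1)}u_k^2\le 1/k\to 0$ automatically, so your translated sequence $\tilde u_k$ converges to $0$ locally for free --- the non-degeneracy argument does no work. The conclusion $\int K(x)U_{x^1}^{p-1}u_k^2\to 0$ is then correct but contentless: by the symmetry this integral is independent of $i$, so when you ``sum over $i$'' you are adding $k$ copies of an $o(1)$ quantity, obtaining only $k\cdot o(1)$. That does not contradict the lower bound $1/p$. To close the argument along your lines you would need $\int_{B_R(x^1)}u_k^2=o(1/k)$, a rate which weak convergence alone cannot deliver.

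The paper repairs this by normalizing $\|u_n\|_s^2=k$, so that the sector $\Omega_1$ carries norm exactly $1$. Then $\tilde u_n$ is genuinely bounded (not vanishing) in $H^s_{loc}$, the weak limit $u$ is a priori nontrivial, and the non-degeneracy of $L_0$ together with the evenness in $x_2,\dots,x_N$ and the single orthogonality $\int U^{p-1}\partial_{x_1}U\,u=0$ is actually used to force $u=0$. The contradiction is obtained \emph{at the sector level}: from $\langle Lu_n,u_n\rangle=o(\|u_n\|_s^2)=o(k)$ and symmetry one gets
\[
\int_{\Omega_1}\int_{\r^N}\frac{|u_n(x)-u_n(y)|^2}{|x-y|^{N+2s}}+\int_{\Omega_1}|u_n|^2-p\int_{\Omega_1}K(x)U_{r_k}^{p-1}u_n^2=o(1),
\]
whose left side equals $1-p\int_{\Omega_1}K(x)U_{r_k}^{p-1}u_n^2$; the last integral is then shown to be $o_n(1)+o_R(1)$ using Lemma~\ref{lmA.2} and the vanishing of $u$. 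No global summation over $i$ is ever performed. Equivalently, you could keep $\|u_k\|_s=1$ but take the weak limit of $\sqrt{k}\,\tilde u_k$ rather than of $\tilde u_k$; either way, the per-bump rescaling is the missing idea.
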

\begin{proof}We argue by contradiction. Suppose that there are $n\rightarrow +\infty,\ r_k\in
S_k$, and $u_n\in E,$ such that
$$
\|Lu_n\|=o(1)\|u_n\|_s,\ \ \ \ \|u_n\|_s^2=k.
$$
Recall
$$
\Omega_i=\Big\{x=(x',x'')\in \r^2\times
\r^{N-2}:\langle\frac{x'}{|x'|},\frac{x^i}{|x^i|}\rangle\geq
\cos\frac{\pi}{k} \Big\},\ \ i=1,2,\cdots,k.
$$
By symmetry, we have
\begin{equation}\label{4.2}\begin{split}
&\int_{\Omega_1}\int_{\r^N}\frac{(u_n(x)-u_n(y))(\varphi(x)-\varphi(y))}{|x-y|^{N+2s}}+\int_{\Omega_1}\Big(u_n\varphi-pK(x)U_{r_k}^{p-1}u_n\varphi\Big)\\
&=\frac{1}{k}\langle Lu_n,\varphi\rangle=o(\frac{1}{\sqrt{k}})\|\varphi\|_s, \ \ \varphi\in E.\\
\end{split}
\end{equation}
In particular,
$$
\int_{\Omega_1}\int_{\r^N}\frac{|u_n(x)-u_n(y)|^2}{|x-y|^{N+2s}}+\int_{\Omega_1}\Big(|u_n|^2-pK(x)U_{r_k}^{p-1}|u_n|^2\Big)=o(1)
$$
and
$$
\int_{\Omega_1}\int_{\r^N}\frac{|u_n(x)-u_n(y)|^2}{|x-y|^{N+2s}}+\int_{\Omega_1}|u_n|^2=1.
$$
 Set $\widetilde{u}_n(x)=u_n(x-x^1).$ Then for any
$R>0,$ since $dist(x^1,\partial \Omega_1)=r\sin \frac{\pi}{k}\geq
Ck^{\frac{m}{N+2s-m}} \rightarrow +\infty,$ $B_R(x^1)\subset \Omega_1$, i.e.
$$
\int_{B_R(x^1)}\int_{\r^N}\frac{|u_n(x)-u_n(y)|^2}{|x-y|^{N+2s}}+\int_{B_R(x^1)}|u_n|^2\leq
1.
$$
One can obtain
$$
\int_{B_R(0)}\int_{\r^N}\frac{|\widetilde{u}_n(x)-\widetilde{u}_n(y)|^2}{|x-y|^{N+2s}}+\int_{B_R(0)}|\widetilde{u}_n|^2\leq
1.
$$
So we suppose that there is a $u \in H^s(\r^N),$ such that as
$n\rightarrow +\infty,$
$$
\widetilde{u}_n \rightharpoonup u,\ \ \ \hbox{in} \ \ \ H^s_{loc}(\r^N)
$$
and
$$
\widetilde{u}_n \rightarrow u,\ \ \ \hbox{in} \ \ L^2_{loc}(\r^N).
$$
Since $\widetilde{u}_n$ is even in $x_j,j=2,\cdots,N,$ it is easy to
see that $u$ is even in $x_j,j=2,\cdots,N.$ On the other hand, from
$$
\int_{\r^N}U_{x^1}^{p-1}Z_1u_n=0,
$$
 we get
$$
\int_{\r^N}U^{p-1}\frac{\partial U}{\partial x_1}\widetilde{u}_n=0.
$$
So, $u$ satisfies
\begin{equation}\label{4.3}
\int_{\r^N}U^{p-1}\frac{\partial U}{\partial x_1}u=0.
\end{equation}
Now, we claim that $u$ satisfies
\begin{equation}\label{4.4}
(-\Delta)^s u +u-pU^{p-1}u=0\ \ \ \hbox{in} \ \ \r^N.
\end{equation}
Indeed, we set
$$
\widetilde{E}=\Big\{\varphi: \varphi \in H^s(\r^N),\
\int_{\r^N}U^{p-1}\frac{\partial U}{\partial x_1}\varphi=0\Big\}.
$$
For any $R>0,$ let $\varphi \in C_0^{\infty}(B_R(0))\cap
\widetilde{E}$ be any function, satisfying that $\varphi$ is even in
$x_j,j=2,\cdots,N.$  Then $\varphi_1(x)=\varphi(x-x^1)\in
C_0^{\infty}(B_R(x^1)).$ We may identify $\varphi_1(x)$ as elements
in $E$ by redefining the values outside $\Omega_1$ with the symmetry.
By using \eqref{4.2} and Lemma \ref{A.2}, we find
\begin{equation}\label{4.5}
\int_{\r^N}\int_{\r^N}\frac{(u(x)-u(y))(\varphi(x)-\varphi(y))}{|x-y|^{N+2s}}+\int_{\r^N}\Big(
u\varphi-pU^{p-1}u\varphi\Big)=0.
\end{equation}
On the other hand, since $u$ is even in $x_j,j=2,\cdots,N,$
\eqref{4.5} holds for any $\varphi \in C_0^{\infty}(B_R(0))\cap
\widetilde{E}.$ By the density of $C_0^\infty(\r^N)$ in $H^s(\r^N)$,
it is easy to show that
\begin{equation}\label{4.6}
\int_{\r^N}\int_{\r^N}\frac{(u(x)-u(y))(\varphi(x)-\varphi(y))}{|x-y|^{N+2s}}+\int_{\r^N}\Big(
u\varphi-pU^{p-1}u\varphi\Big)=0, \ \ \ \ \forall\ \varphi\in
\widetilde{E}.
\end{equation}
We know $\varphi=\frac{\partial U}{\partial x_1}$ is a solution of
\eqref{4.6}, thus \eqref{4.6} is true for any $\varphi \in
H^s(\r^N).$ One see that $u=0$ because $u$ is even in
$y_i,i=2,\cdots,N$ and \eqref{4.4}. As a result,
$$
\int_{B_R(x^1)}u_n^2=o_n(1),\ \ \ \forall\ R>0,
$$
where $o_n(1)\to 0$ as $n\to +\infty$.

Now, using the Lemma \ref{lmA.2}, we obtained that for
any $1<\eta\leq N+2s,$ there is a constant $C>0,$ such that
$$
U_{r_k}(x)\leq \frac{C}{(1+|x-x^1|)^{N+2s-\eta}},\,\,x\in\Omega_1.
$$
 Thus,
\begin{eqnarray*}
o_n(1)&=&\int_{\Omega_1\times
\r^N}\frac{|u_n(x)-u_n(y)|^2}{|x-y|^{N+2s}}+\int_{\Omega_1}\Big(u_n^2-pK(x)U^{p-1}_{r_k}u^2_n\Big)\\
&=&\int_{\Omega_1}\int_{\r^N}\frac{|u_n(x)-u_n(y)|^2}{|x-y|^{N+2s}}+\int_{\Omega_1}u_n^2\\
&&+C\Big(\int_{B_{\frac12R}(x^1)}+\int_{\Omega_1\setminus
B_{\frac12R}(x^1)}\frac{1}{(1+|x-x^1|)^{N+2s-\eta}}u_n^2\Big)\\
 &\geq&
\frac{1}{2}\Bigl(\int_{\Omega_1}\int_{\r^N}\frac{|u_n(x)-u_n(y)|^2}{|x-y|^{N+2s}}+\int_{\Omega_1}u_n^2\Bigr)+o_n(1)+o_R(1),
\end{eqnarray*}
which is impossible for large $R$.

As a result, we get a contradiction.
\end{proof}
\begin{proposition}\label{prop4.2} There is an integer $k_0>0,$ such
that for each $k\geq k_0,$ there is a $C^1$ map from $S_k$ to
$\mathscr{H}$: $\omega=\omega(r),\ r=|x^1|,$ satisfying $\omega\in
E,$ and
$$
 J'(\omega)\Big|_E=0.
$$
Moreover, there exists a constant $C>0$ independent of $k$ such that
\begin{equation}\label{4.7}
\|\omega\|_s\leq Ck^{\frac12}\Bigl(\Bigl(\frac{k}{r}\Bigr)^{\frac{N+2s}{2}+\tau}+\frac{1}{r^{\frac{m}{2}+\tau}}\Bigr),
\end{equation}
where $\tau>0$ is a small constant.
\end{proposition}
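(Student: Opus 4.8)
The plan is to apply the standard Lyapunov--Schmidt reduction using the expansion \eqref{expand} of $J(\varphi)$ on the space $E$. First I would recast the equation $J'(\varphi)|_E=0$ as a fixed-point problem: since $J'(\varphi)|_E = l + L(\varphi) + R'(\varphi)$ (all understood as elements of $E$ via the Riesz identification and the orthogonal projection $P_E$ onto $E$), and since Proposition~\ref{prop4.1} gives that $L$ is invertible on $E$ with $\|L^{-1}\|\le \rho^{-1}$ uniformly in $k\ge k_0$ and $r\in S_k$, the equation is equivalent to
$$
\varphi = A(\varphi) := -L^{-1}\bigl(P_E l + P_E R'(\varphi)\bigr).
$$
I would then show that $A$ is a contraction on a suitable ball $\mathcal{B}=\{\varphi\in E:\|\varphi\|_s\le \Lambda_k\}$, where $\Lambda_k$ is the right-hand side of \eqref{4.7} (times a large constant). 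The contraction property follows from Lemma~\ref{lm3.2}: for $\varphi,\psi\in\mathcal B$,
$$
\|A(\varphi)-A(\psi)\|_s \le \rho^{-1}\|R'(\varphi)-R'(\psi)\|
\le \rho^{-1}\sup_{t\in[0,1]}\|R''(t\varphi+(1-t)\psi)\|\,\|\varphi-\psi\|_s
\le C\rho^{-1}\Lambda_k^{\min\{p-1,1\}}\|\varphi-\psi\|_s,
$$
which is a genuine contraction once $k_0$ is large, since $\Lambda_k\to 0$. That $A$ maps $\mathcal B$ into itself then reduces, via $\|A(\varphi)\|_s\le \rho^{-1}(\|l\|+\|R'(\varphi)\|)\le \rho^{-1}(\|l\| + C\Lambda_k^{\min\{p,2\}})$ and the fact that $\min\{p,2\}>1$, to the key estimate
$$
\|l\| \le C\,k^{1/2}\Bigl(\bigl(\tfrac{k}{r}\bigr)^{\frac{N+2s}{2}+\tau}+\tfrac{1}{r^{\frac m2+\tau}}\Bigr),
$$
i.e. estimate \eqref{4.7} with $\omega$ replaced by $l$. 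By the Banach fixed point theorem this produces a unique $\omega=\omega(r)\in\mathcal B$ with $J'(\omega)|_E=0$, and the bound \eqref{4.7} is then automatic since $\omega\in\mathcal B$.

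The main work, and the step I expect to be the principal obstacle, is the estimate of $\|l\|$. Here $l(\varphi)=\int_{\r^N}\bigl(\sik U_{x^i}^p - K(x)(\sik U_{x^i})^p\bigr)\varphi$, so by duality $\|l\| = \|P_E(\,\cdot\,)\|\le \bigl\|\sik U_{x^i}^p - K(x)U_r^p\bigr\|_{(H^s)^*}$, and one splits
$$
\sik U_{x^i}^p - K(x)U_r^p
= \underbrace{\Bigl(\sik U_{x^i}^p - U_r^p\Bigr)}_{\text{interaction error}}
+ \underbrace{(1-K(x))\,U_r^p}_{\text{potential error}}.
$$
The interaction term is controlled working in a single fundamental domain $\Omega_1$ (using the symmetry, as in the proof of Proposition~\ref{prop4.1}) and bounding $\sum_{i\ge 2}U_{x^i}$ on $\Omega_1$ by the polynomial-decay estimate of Theorem~\ref{th2.2} together with the lattice sum $\sum_{i=2}^k |x^1-x^i|^{-(N+2s)}$; since $|x^1-x^i|\sim r|i/k|$, this sum behaves like $(k/r)^{N+2s}$ up to logarithmic/constant factors, giving the $\bigl(k/r\bigr)^{\frac{N+2s}{2}+\tau}$ contribution after accounting for the $p$-th power and the $H^s$-dual norm. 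The potential term uses hypothesis $(K)$: on $\Omega_1$ one has $|1-K(x)|\lesssim |x|^{-m}\sim r^{-m}$ near $x^1$ where $U_r^p$ concentrates, producing the $r^{-m/2-\tau}$ contribution. Throughout, these computations rely on the basic estimates and the energy expansion promised for the Appendix (Lemma~\ref{lmA.2}, Lemma~\ref{A.2}, Proposition~\ref{propA.1}); I would invoke those rather than redo them. Finally, the $C^1$ dependence of $\omega$ on $r$ follows from the implicit function theorem applied to the map $(r,\varphi)\mapsto J'(\varphi)|_E$, whose partial derivative in $\varphi$ at $(r,\omega(r))$ is $L + R''(\omega(r))$, invertible by Proposition~\ref{prop4.1} and the smallness of $\|R''(\omega)\|$ from Lemma~\ref{lm3.2}.
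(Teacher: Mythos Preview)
Your proposal is correct and follows essentially the same route as the paper: recast $J'(\varphi)|_E=0$ as the fixed-point equation $\varphi=-L^{-1}(l_k+R'(\varphi))$, invoke Proposition~\ref{prop4.1} for the uniform invertibility of $L$, use Lemma~\ref{lm3.2} for the contraction estimate, and reduce the self-map property to the bound on $\|l\|$. The paper isolates that last bound as a separate Lemma~\ref{lm4.1} (proved immediately after Proposition~\ref{prop4.2}) with exactly the splitting into interaction and potential errors that you describe; for the $C^1$ dependence on $r$ it defers to an argument in \cite{dpw} rather than writing out the implicit function theorem step, but your direct IFT justification is equivalent.
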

\begin{proof}We will use the contraction theorem to prove it.
By the following Lemma \ref{lm4.1}, $l(\omega)$ is a bounded linear
functional in $E$. We know by Reisz representation theorem that there is
an $l_k\in E,$ such that
$$
l(\omega)=\langle l_k,\omega\rangle.
$$
So, finding a critical point for $J(\omega)$ is equivalent to
solving
\begin{equation}\label{4.8}
l_k+L\omega+R'(\omega)=0.
\end{equation}
By Proposition \ref{prop4.1}, $L$ is invertible. Thus, \eqref{4.8}
is equivalent to
$$
\omega=A(\omega):=-L^{-1}(l_k+R'(\omega)).
$$
Set
$$
\bar S_k:=\Bigl\{\omega\in E:\|\omega\|_s\leq k^{\frac12}\Bigl(\frac{k}{r}\Bigr)^{\frac{N+2s+\tau}{2}}+\frac{k^\frac12}{r^{\frac{m+\tau}{2}}}\Bigr\}.
$$

We shall verify that $A$ is a contraction mapping from $\bar S_k$ to
itself. In fact,  on one hand, for any $\omega\in \bar S_k$, by Lemmas
\ref{lm4.1} and \ref{lm3.2}, we obtain
\begin{eqnarray*}
\|A(\omega)\|&\leq&C(\|l\|+\|R'(\omega)\|)\\
&\leq& C\|l\|+C\|\omega\|_s^{\min\{p,2\}}\\
&\leq& C\Bigl(k^{\frac12}\Bigl(\frac{k}{r}\Bigr)^{\frac{N+2s}{2}+\tau}+\frac{k^\frac12}{r^{\frac{m}{2}+\tau}}\Bigr)
+C\Bigl(k^{\frac12}\Bigl(\frac{k}{r}\Bigr)^{\frac{N+2s+\tau}{2}}+\frac{k^\frac12}{r^{\frac{m+\tau}{2}}}\Bigr)^{\min\{p,2\}} \\
&\leq&k^{\frac12}\Bigl(\frac{k}{r}\Bigr)^{\frac{N+2s+\tau}{2}}+\frac{k^\frac12}{r^{\frac{m+\tau}{2}}}.
\end{eqnarray*}

On the other hand, for any $\omega_1,\omega_2 \in \bar S_k$,
\begin{eqnarray*}
\|A(\omega_1)-A(\omega_2)\|&=&\|L^{-1}R'(\omega_1)-L^{-1}R'(\omega_2)\|\\
&\leq& C\|R'(\omega_1)-R'(\omega_2)\|\\
&\leq& C\|R''(\theta\omega_1+(1-\theta)\omega_2)\|\|\omega_1-\omega_2\|_s\\
&\leq& \left\{\begin{array}{ll}
        C(\|\omega_1\|_s^{p-1}+\|\omega_2\|_s^{p-1})\|\omega_1-\omega_2\|_s,\ \ \ & \hbox{$\text{if}\,\,1<p<2$}\vspace{2mm}\\
         C(\|\omega_1\|_s+\|\omega_2\|_s)\|\omega_1-\omega_2\|_s,\ \ \ & \hbox{$\text{if}\,\,p\geq2$}\\
    \end{array}\right.\\
&\leq& \frac{1}{2}\|\omega_1-\omega_2\|_s.
\end{eqnarray*}

Then the result follows from the contraction mapping theorem. The estimate \eqref{4.7} follows  Lemma~\ref{lm4.1}.

The claim that $\omega(r)$ is continuously differentiable in $r$ can be verified by using the same argument employed to proof Lemma~4.4 in \cite{dpw}.
\end{proof}

\begin{lemma}\label{lm4.1} There is a constant $C>0$ and a small constant $\tau>0$, which are independent of $k$,  such
that
$$
\|l_k\|\leq C\Bigl(k^{\frac12}\Bigl(\frac{k}{r}\Bigr)^{\frac{N+2s}{2}+\tau}+\frac{k^\frac12}{r^{\frac{m}{2}+\tau}}\Bigr),
$$
provided $k\ge k_0$ for some integer $k_0>0$.
\end{lemma}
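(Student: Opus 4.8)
I would prove Lemma~\ref{lm4.1} by duality, reducing it to the interaction and potential estimates of the Appendix. Since $l_k$ is the Riesz representative in $E$ of the functional $\varphi\mapsto l(\varphi)$ and $E\subset H^s(\r^N)$ is a closed subspace, one has $\|l_k\|_s=\sup_{\varphi\in E,\,\|\varphi\|_s\le1}|l(\varphi)|\le\sup_{\varphi\in H^s(\r^N),\,\|\varphi\|_s\le1}|l(\varphi)|$, with $l(\varphi)$ understood through the integral formula that defines it; so it suffices to bound $|l(\varphi)|$ for a general $\varphi$ with $\|\varphi\|_s\le1$. First I would split, inserting $\pm\bigl(\sik U_{x^i}\bigr)^p$,
\[
l(\varphi)=\int_{\r^N}\Bigl(\sik U_{x^i}^p-\bigl(\sik U_{x^i}\bigr)^p\Bigr)\varphi+\int_{\r^N}\bigl(1-K(x)\bigr)\bigl(\sik U_{x^i}\bigr)^p\varphi=:I_1(\varphi)+I_2(\varphi),
\]
and for each piece use Cauchy--Schwarz, $|I_j(\varphi)|\le\|g_j\|_{L^2(\r^N)}\|\varphi\|_{L^2(\r^N)}\le\|g_j\|_{L^2(\r^N)}$, where $g_1:=\bigl|\sik U_{x^i}^p-(\sik U_{x^i})^p\bigr|$ and $g_2:=|1-K|\,(\sik U_{x^i})^p$. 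Both integrands are invariant under the rotations by $2\pi/k$ in the $x'$-plane (since $K$ is radial and $\{x^i\}$ is a $k$-symmetric configuration), so with $\Omega_1,\dots,\Omega_k$ the congruent sectors tiling $\r^N$ one gets $\|g_j\|_{L^2(\r^N)}^2=k\,\|g_j\|_{L^2(\Omega_1)}^2$. This is where the factor $k^{1/2}$ in the claim comes from; it remains to show $\|g_1\|_{L^2(\Omega_1)}\le C(k/r)^{(N+2s)/2+\tau}$ and $\|g_2\|_{L^2(\Omega_1)}\le Cr^{-m/2-\tau}$.

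On $\Omega_1$ the bump $U_{x^1}$ dominates; writing $W:=\sum_{i\ne1}U_{x^i}$, an elementary convexity inequality of the type used in Lemma~\ref{lm3.2} gives $g_1\le C\bigl(U_{x^1}^{p-1}W+W^p\bigr)$ pointwise on $\Omega_1$, hence $\|g_1\|_{L^2(\Omega_1)}\le C\|U_{x^1}^{p-1}W\|_{L^2(\Omega_1)}+C\|W\|_{L^{2p}(\Omega_1)}^{\,p}$. For $g_2$ I would use assumption $(K)$, which yields $|1-K(x)|\le C(1+|x|)^{-m}$ (and $|1-K|\le C$ everywhere) together with $(\sik U_{x^i})^p\le C(U_{x^1}^p+W^p)$; splitting $\Omega_1$ into $\{|x-x^1|<r/2\}$ --- where $|x|\sim r$, so $|1-K|\lesssim r^{-m}$ while $U_{x^1}^p$ is a fixed $L^2$ profile --- and $\{|x-x^1|\ge r/2\}$ --- where $U_{x^1}=O(r^{-(N+2s)})$ and the sector has volume $O(r^N/k)$, so this part lies far below $r^{-m/2-\tau}$ because $p(N+2s)>\tfrac{N+m}{2}+\tau$ --- one is left with interaction and potential integrals of $U_{x^1}$ and $W$ over $\Omega_1$. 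These are exactly the basic estimates of the Appendix: invoking the decay $U(x)\le C(1+|x|)^{-(N+2s)}$ of Theorem~\ref{th2.2}, the separation $|x^1-x^i|\ge c\,r\sin\frac{\pi(i-1)}{k}$, and the interaction bounds of Proposition~\ref{propA.1} and Lemma~\ref{lmA.2}, one sums the contributions of the $k-1$ satellite bumps and obtains the two required bounds; the small $\tau>0$ (carried in the exponent $\eta$ of Lemma~\ref{lmA.2}) is inserted only to make the tail integrals at infinity converge. Adding the two pieces gives $\|l_k\|_s\le k^{1/2}\bigl(\|g_1\|_{L^2(\Omega_1)}+\|g_2\|_{L^2(\Omega_1)}\bigr)\le C\bigl(k^{1/2}(k/r)^{(N+2s)/2+\tau}+k^{1/2}r^{-m/2-\tau}\bigr)$ for all $k\ge k_0$.

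The main obstacle is the interaction bookkeeping, i.e.\ the estimates for $\|U_{x^1}^{p-1}W\|_{L^2(\Omega_1)}$ and $\|W\|_{L^{2p}(\Omega_1)}$: one must control simultaneously the polynomially decaying profile $U_{x^1}$, the superposition $W$ of $k-1$ polynomially decaying satellites located at distances $\sim r\sin(\pi j/k)$, $j=1,\dots,k-1$, and --- in sharp contrast with the case $s=1$, where $U$ decays exponentially --- the bare integrability of the resulting products at spatial infinity, where $U$ decays only like $|x|^{-(N+2s)}$. This is precisely what the preliminary estimates of Section~3 and of the Appendix are designed to supply; granting them, the bound on $\|l_k\|_s$ follows as above. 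The integer $k_0$ enters so that, for $r\in S_k$, one has $r\to\infty$ and $\min_{i\ne j}|x^i-x^j|\to\infty$, which is what makes the polynomial decay estimates effective.
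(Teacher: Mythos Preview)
Your approach is correct and follows the same overall strategy as the paper --- split $l(\varphi)$ into an interaction piece $I_1$ and a potential piece $I_2$, reduce by symmetry to $\Omega_1$, and invoke the decay estimate of Lemma~\ref{lmA.2} --- but it differs in one substantive choice: you pair $g_j$ with $\varphi$ via the $L^2$--$L^2$ Cauchy--Schwarz inequality, whereas the paper uses the H\"older pairing $L^{(p+1)/p}\times L^{p+1}$ together with the Sobolev embedding $H^s\hookrightarrow L^{p+1}$. Your choice is actually the cleaner one here. Since $\|g_j\|_{L^2(\r^N)}^2=k\,\|g_j\|_{L^2(\Omega_1)}^2$, the factor $k^{1/2}$ appears immediately, and applying Lemma~\ref{lmA.2} with $\eta=\tfrac{N+2s}{2}+\tau$ gives $\|U_{x^1}^{p-1}W\|_{L^2(\Omega_1)}\le C(k/r)^{(N+2s)/2+\tau}$ directly, the required $L^2$ integrability reducing to the elementary inequality $(N+2s)(2p-1)>N$. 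By contrast, the paper first arrives at $k^{p/(p+1)}(k/r)^{(N+2s)/2+\sigma}$ and must then convert the prefactor $k^{p/(p+1)}$ into $k^{1/2}$ by choosing $\sigma$ subject to the two constraints~\eqref{sigma}; the second of these is precisely where the lower bound $m>\frac{N+2s}{N+2s+1}$ in~\eqref{1.7"} is invoked, so your route bypasses that hypothesis for this particular lemma. One small correction: the region $\Omega_1\cap\{|x-x^1|\ge r/2\}$ is unbounded, so the ``volume $O(r^N/k)$'' heuristic is not literally valid; the right argument --- which your stated condition $p(N+2s)>\tfrac{N+m}{2}+\tau$ already encodes --- is to integrate the decay $U_{x^1}^{2p}\le C(1+|x-x^1|)^{-2p(N+2s)}$ over $\{|x-x^1|\ge r/2\}$, yielding the bound $Cr^{\,N-2p(N+2s)}$.
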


\begin{proof}
\begin{equation}\label{4.9}\begin{split}
|l(\varphi)|&=\Big|\int_{\r^N}\sik U_{x^i}^p\varphi-K(x)\Big(\sik U_{x^i}\Big)^p\varphi\Big|\\
&\leq C \int_{\r^N}\Big|\Big(\sik U_{x^i}\Big)^p-\sik
U_{x^i}^p\Big||\varphi|+\int_{\r^N}\Big|(K(x)-1)(\sik U_{x^i})^p \varphi\Big|.\\
\end{split}
\end{equation}

From $m>\frac{N+2s}{N+2s+1}$, we deduce that
$$
\frac{N+2s-m}{m}<N+2s<\Bigl((N+2s)(p-1)+\frac{N+2s}2-\frac{p}{p+1}N\Bigr)\Bigl(\frac{p}{p+1}-\frac12\Bigr)^{-1}.
$$
Hence, we can choose $\sigma\in (0,\,\frac{N+2s}{2})$ such that
\begin{equation}\label{sigma}
\frac{p+1}{p}\Bigl((N+2s)(p-1)+\frac{N+2s}{2}-\sigma\Bigr)>N,\,\,\,\frac{p}{p+1}-\frac12<\frac{m}{N+2s-m}\sigma.
\end{equation}

Using the fact $ U_{x^i}\le U_{x^1},\,(x\in\Omega_1) $ and  Lemma \ref{lmA.2}, we obtain
\begin{equation}\label{4.10}\begin{split}
&\int_{\r^N}\Big|(\sik U_{x^i})^p-\sik
U_{x^i}^p\Big||\varphi|=k\int_{\Omega_1}\Big|(\sik U_{x^i})^p-\sik
U_{x^i}^p\Big||\varphi|\\
\leq& Ck\int_{\Omega_1}U_{x^1}^{p-1}\sum_{i=2}^kU_{x^i}|\varphi|\\
\leq&
Ck\Big(\int_{\Omega_1}\Big(U_{x^1}^{p-1}\sum_{i=2}^kU_{x^i}\Big)^{\frac{p+1}{p}}\Big)^{\frac{p}{p+1}}\Big(\int_{\Omega_1}|\varphi|^{p+1}\Big)^\frac{1}{p+1}\\
\leq&Ck\Big(\int_{\Omega_1}\Big(\frac{1}{(1+|x-x^1|)^{(N+2s)(p-1)+\frac{N+2s}{2}-\sigma}}\Big(\frac{k}{r}\Big)^{\frac{N+2s}{2}+\sigma}\Big)^{\frac{p+1}{p}}\Big)
^{\frac{p}{p+1}}\Big(\int_{\Omega_1}|\varphi|^{p+1}\Big)^\frac{1}{p+1}\\
=&Ck^{\frac{p}{p+1}}\Big(\frac{k}{r}\Big)^{\frac{N+2s}{2}+\sigma}\Big(\int_{\Omega_1}\Big(\frac{1}{(1+|x-x^1|)^{(N+2s)(p-1)
+\frac{N+2s}{2}-\sigma}}\Big)^\frac{p+1}{p}\Big)^{\frac{p}{p+1}}
\|\varphi\|_s\\
\leq&Ck^{\frac12}\Bigl(\frac{k}{r}\Bigr)^{\frac{N+2s}{2}+\tau}\|\varphi\|_s,\\
\end{split}
\end{equation}
for some small constant  $\tau>0$, where we have used \eqref{sigma}.

 On the other hand, we have
\begin{equation}\label{4.11}\begin{split}
&\int_{\r^N}\Big|(K(x)-1)\sik
U_{x^i}^p\varphi\Big|=k\int_{\r^N}\Big|(K(x)-1)U_{x^1}^p\varphi\Big|\\
\leq&
Ck\Big(\int_{\r^N}|K(x)-1|^{\frac{p+1}{p}}U_{x^1}^{p+1}\Big)^{\frac{p}{p+1}}\Big(\int_{\r^N}|\varphi|^{p+1}\Big)^{\frac{1}{p+1}}\\
\leq&
Ck\|\varphi\|_s\Big(\Bigl(\int_{B_{\frac{r}{2}}(x^1)}|K(x)-1|^{\frac{p+1}{p}} U_{x^1}^{p+1}\Bigr)^{\frac{p}{p+1}}+C\int_{\r^N\setminus
B_{\frac{r}{2}}(x^1)}
U_{x^1}^{p+1}\Big)^{\frac{p}{p+1}}\\
\leq& C\|\varphi\|_s\Big(\frac{k}{r^{m}}+\frac{k}{r^{(N+2s)p-\frac{p}{p+1}N}}\Big)\\
 \leq&C \frac{k^{\frac12}}{r^{\frac{m}2+\tau}}\|\varphi\|_s,
\end{split}
\end{equation}
for some small constant  $\tau>0$, where the last inequality is due to the assumption $\frac{N+2s}{N+2s+1}<m<N+2s$.

Inserting \eqref{4.10}--\eqref{4.11} into \eqref{4.9}, we can complete the proof.
\end{proof}

Now, we are ready  to prove our main theorem. Let
$\omega=\omega(r)$ be the map obtained in Proposition \ref{prop4.2}.
Define
$$F(r)=I(U_r+\omega),\ \ \ \forall\, r\in S_k.$$  It follows from  Lemma~6.1 in \cite{dpw} that
if $r$ is a critical point of $F(r)$, then $U_r+\omega$ is a
solution of \eqref{eq}.

\begin{proof}[\textbf{Proof of Theorem \ref{th3}}]
It follows from Propositions \ref{prop4.2} and  \ref{propA.1} that
\begin{equation}\label{3.9}\begin{split}
F(r)&=I(U_r)+O(\|l\|\|\omega\|+\|\omega\|^2)\\
&=k\Big(A-\frac{B_0
k^{N+2s}}{r^{N+2s}}+\frac{B_1}{r^m}+O\Big(\frac{1}{r^{m+\sigma}}\Big)+O\Bigl(\Bigl(\frac{k}{r}\Bigr)^{N+2s+2\tau}+\frac{1}{r^{m+2\tau}}\Bigr)\Big)\\
&=k\Big(A-\frac{B_0
k^{N+2s}}{r^{N+2s}}+\frac{B_1}{r^m}+O\Big(\frac{1}{r^{m+\sigma}}\Big)\Big).
\end{split}
\end{equation}

Define
$$
F_1(r) :=-\frac{B_0
k^{N+2s}}{r^{N+2s}}+\frac{B_1}{r^m}+O\Big(\frac{1}{r^{m+\sigma}}\Big).
$$
We consider the following maximization problem
$$
\max\limits_{r\in S_k} F_1(r).
$$
Suppose that $\hat r$ is a maximizer, we will prove that $\hat r$ is
an interior point of $S_k$.

We can check that the function
 $$
 G(r)=-\frac{B_0
k^{N+2s}}{r^{N+2s}}+\frac{B_1}{r^m}
$$
has a maximum point
$$
\widetilde{r}=\Big(\frac{B_0(N+2s)}{B_1m}\Big)^{\frac{1}{N+2s-m}}k^{\frac{N+2s}{N+2s-m}}
$$
and
$$
\frac{B_0k^{N+2s}}{\widetilde{r}^{N+2s}}=\frac{B_1}{\widetilde{r}^m}\frac{m}{N+2s}.
$$
 By direct computation, we deduce that
\begin{eqnarray*}
F_1(\widetilde{r})&=&-\frac{B_0
k^{N+2s}}{\widetilde{r}^{N+2s}}+\frac{B_1}{\widetilde{r}^m}+O\Big(\frac{1}{\widetilde{r}^{m+\sigma}}\Big)\\
&=&\frac{B_1}{{\tilde r}^m}\Bigl(1-\frac{m}{N+2s}\Bigr)+O\Big(\frac{1}{\widetilde{r}^{m+\sigma}}\Big)\\
&=&\frac{B_1^{\frac{N+2s}{N+2s-m}}}{B_0^{\frac{m}{N+2s-m}}}\frac{1}{(\frac{N+2s}{m})
^{\frac{N+2s}{N+2s-m}}}\Big(\frac{N+s}{2m}-1\Big)k^{-\frac{(N+2s)m}{N+2s-m}}+O(k^{-\frac{(N+2s)m}{N+2s-m}-\sigma}).
\end{eqnarray*}
On the other hand, we find
\begin{eqnarray*}
&&F_1\Big(\Big(\frac{B_0(N+2s)}{B_1m}-\alpha\Big)^{\frac{1}{N+2s-m}}k^{\frac{N+2s}{N+2s-m}}\Big)\\
&=&-\frac{B_0
k^{N+2s}}{(\frac{N+2s}{m}\frac{B_0}{B_1}-\alpha)^{\frac{N+2s}{N+2s-m}}k^{\frac{(N+2s)^2}{N+2s-m}}}
+\frac{B_1}{(\frac{N+2s}{m}\frac{B_0}{B_1}-\alpha)^{\frac{m}{N+2s-m}}k^{\frac{m(N+2s)}{N+2s-m}}}+O(k^{-\frac{(N+2s)m}{N+2s-m}-\sigma})\\
&=&\frac{B_1^{\frac{N+2s}{N+2s-m}}}{B_0^{\frac{m}{N+2s-m}}}\frac{1}{(\frac{N+2s}{m}-\alpha\frac{B_1}{B_0})
^{\frac{N+2s}{N+2s-m}}}\Big(\frac{N+2s}{m}-\alpha\frac{B_1}{B_0}-1\Big)k^{-\frac{(N+2s)m}{N+2s-m}}+O(k^{-\frac{(N+2s)m}{N+2s-2m}-\sigma})\\
&<&F_1(\widetilde{r})
\end{eqnarray*}
and similarly
\begin{eqnarray*}
&&F_1\Big(\Big(\frac{B_0(N+2s)}{B_1m}+\alpha\Big)^{\frac{1}{N+2s-m}}k^{\frac{N+2s}{N+2s-m}}\Big)\\
&=&\frac{B_1^{\frac{N+2s}{N+2s-m}}}{B_0^{\frac{m}{N+2s-m}}}\frac{1}{(\frac{N+2s}{m}+\alpha\frac{B_1}{B_0})
^{\frac{N+2s}{N+2s-m}}}\Big(\frac{N+2s}{m}+\alpha\frac{B_1}{B_0}-1\Big)k^{-\frac{(N+2s)m}{N+2s-m}}+O(k^{-\frac{(N+s)m}{N+2s-2m}-\sigma})\\
&<&F_1(\widetilde{r}),
\end{eqnarray*}
where we have used the fact that the function
$f(x)=x^{-\frac{N+2s}{N+2s-m}}(x-1)$ attains its maximum at
$x_0=\frac{N+2s}{m}$ if
$x\in[\frac{N+2s}{m}-\alpha,\frac{N+2s}{m}+\alpha]$.

The above estimates implies  that $\hat r$ is indeed an  interior
point of $S_k$.   Thus
$$u_{\hat r}=U_{\hat r}+\omega_{\hat r}$$ is a solution of \eqref{eq}.

At last, we claim that $u_{\hat r}>0$. Indeed, since $\|\omega_{\hat r}\|_s\to 0$ as $k\to\infty$, noticing the fact (see \cite{dpw} for example) that
$$
\|u_{\hat r}\|^2_s=\int_{\r_+^{N+1}}|\nabla \tilde u_{\hat r}|^2 y^{1-2s}dxdy+\int_{\r^N}u_{\hat r}^2dx,
$$
where $\tilde u_{\hat r}(x,y)$ is the $s$-harmonic extension of $u_{\hat r}$ satisfying $\tilde u_{\hat r}(x,0)=u_{\hat r}(x)$, we can use
 the standard argument to verify that $(u_{\hat r})_-=0$ and hence $u_{\hat r}\ge 0$. Since $u_{\hat r}$ solves \eqref{2.4}, we  conclude by using the strong maximum principle that $u_{\hat r}>0$.
\end{proof}

\appendix

\section{{ Energy expansion}}\label{s4}
In this section, we will give the energy expansion for the
approximate solutions. Recall
$$
x^i=\Big(r\cos\frac{2(i-1)\pi}{k},r\sin\frac{2(i-1)\pi}{k}, 0\Big),
\ i=1,\cdots,k,
$$
$$
\Omega_i=\Big\{x=(x',x'')\in \r^2\times
\r^{N-2}:\langle\frac{x'}{|x'|},\frac{x^i}{|x^i|}\rangle\geq
\cos\frac{\pi}{k} \Big\},i=1,2,\cdots,k,
$$
and

$$
I(u)=\frac{1}{2}\int_{\r^{2N}}\frac{|u(x)-u(y)|^2}{|x-y|^{N+2s}}+\frac{1}{2}\int_{\r^N}u^2-\frac{1}{p+1}\int_{\r^N}K(x)|u|^{p+1}.
$$

Firstly, we will introduce Lemma B.1 (\cite{wy2}).

\begin{lemma}\label{lmA.1} For any constant $0<\sigma\leq
\min\{\al,\be\}$, there is a constant $C>0$, such that
$$
\frac{1}{(1+|y-x^i|)^{\al}}\frac{1}{(1+|y-x^j|)^{\be}}\leq
\frac{C}{|x^i-x^j|^{\sigma}}\Big(\frac{1}{(1+|y-x^i|)^{\al+\be-\sigma}}+\frac{1}{(1+|y-x^j|)^{\al+\be-\sigma}}\Big),
$$
where $\al,\ \be\geq 1$ are two constants.
\end{lemma}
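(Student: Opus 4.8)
The plan is to establish the inequality pointwise in $y$ by a simple dichotomy according to which of the two centers $y$ is closer to. Write $d:=|x^i-x^j|$. Since the triangle inequality gives $d\le|y-x^i|+|y-x^j|$, at least one of the distances $|y-x^i|$ and $|y-x^j|$ is $\ge d/2$, so it suffices to treat the two symmetric cases separately.

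Suppose first that $|y-x^i|\ge d/2$. Then $d\le 2|y-x^i|\le 2(1+|y-x^i|)$, and also $|y-x^j|\le|y-x^i|+d\le 3|y-x^i|$, hence $1+|y-x^j|\le 3(1+|y-x^i|)$. Since $0\le\sigma\le\al$, raising the first estimate to the power $\sigma$ and the second to the power $\al-\sigma$ and multiplying gives
\[
d^{\sigma}(1+|y-x^j|)^{\al-\sigma}\le 2^{\sigma}3^{\al-\sigma}(1+|y-x^i|)^{\al}.
\]
Dividing both sides by $d^{\sigma}(1+|y-x^i|)^{\al}(1+|y-x^j|)^{\al-\sigma}$ and then multiplying by $(1+|y-x^j|)^{-\be}$ yields
\[
\frac{1}{(1+|y-x^i|)^{\al}(1+|y-x^j|)^{\be}}\le\frac{2^{\sigma}3^{\al-\sigma}}{d^{\sigma}}\cdot\frac{1}{(1+|y-x^j|)^{\al+\be-\sigma}},
\]
which is dominated by the right-hand side of the asserted inequality, the $x^i$-term there being nonnegative. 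The case $|y-x^j|\ge d/2$ is handled identically after interchanging $i\leftrightarrow j$ and $\al\leftrightarrow\be$ (now using $0\le\sigma\le\be$), and produces the bound $2^{\sigma}3^{\be-\sigma}d^{-\sigma}(1+|y-x^i|)^{-(\al+\be-\sigma)}$.

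Taking $C:=2^{\sigma}\max\{3^{\al-\sigma},3^{\be-\sigma}\}$ and adding the two nonnegative terms on the right then covers both cases at once, which proves the lemma. There is no real obstacle in this argument; the only point that needs a little care is to peel off exactly the exponent $\sigma$ and to keep track of the inequalities $\al-\sigma\ge 0$ and $\be-\sigma\ge 0$, so that elementary comparisons such as $(1+|y-x^j|)^{\al-\sigma}\le 3^{\al-\sigma}(1+|y-x^i|)^{\al-\sigma}$ point in the required direction.
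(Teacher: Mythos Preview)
Your proof is correct. The paper does not actually give a proof of this lemma; it merely cites it as Lemma~B.1 from Wei--Yan \cite{wy2}. Your dichotomy argument (whichever center is farther from $y$ absorbs the factor $d^{-\sigma}$, and the remaining exponent is transferred to the nearer center via $1+|y-x^j|\le 3(1+|y-x^i|)$) is the standard elementary proof of this estimate, and in fact shows that the hypothesis $\alpha,\beta\ge 1$ can be relaxed to $\alpha,\beta>0$.
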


Then, we have the following basic estimate:

\begin{lemma}\label{lmA.2} For any $x\in \Omega_1$, and
$\eta\in (1,N+2s]$, there is a constant $C>0$, such that
$$\sum_{i=2}^kU_{x^i}\leq C\frac{1}{(1+|x-x^1|)^{N+2s-\eta}}\frac{k^\eta}{|x_1|^\eta}\leq C\frac{k^\eta}{|x_1|^\eta}.$$
\end{lemma}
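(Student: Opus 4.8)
The plan is to reduce the estimate to summing a convergent series of algebraic tails, the inputs being the algebraic decay of $U$ and two elementary geometric facts about the sector $\Omega_1$. By Theorem~\ref{th2.2}(ii) there is $C>0$ with $U_{x^i}(x)=U(x-x^i)\le C(1+|x-x^i|)^{-(N+2s)}$ for every $i$ and every $x$. Next I would establish, for $x\in\Omega_1$, the two facts \emph{(a)} $|x-x^1|\le|x-x^j|$ for all $j$, and \emph{(b)} $|x-x^j|\ge\frac12|x^1-x^j|$ for all $j\ne1$. For \emph{(a)}, since the centers are equally spaced on the circle $\{|x'|=r,\ x''=0\}$ with angular gap $2\pi/k$ and $\Omega_1$ is exactly the set of $x=(x',x'')$ whose $x'$-angle lies within $\pi/k$ of that of $x^1$, the center $x^1$ is the one nearest to $x$. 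Fact \emph{(b)} then follows from $|x-x^j|\ge|x^1-x^j|-|x-x^1|$ by distinguishing the cases $|x-x^1|\le\frac12|x^1-x^j|$ and $|x-x^1|>\frac12|x^1-x^j|$ (in the latter one uses \emph{(a)}).

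Granting these, fix $x\in\Omega_1$, $j\ge2$, and put $d_j=|x^1-x^j|$. Since $N+2s-\eta\ge0$ and $|x-x^1|\le|x-x^j|$ by \emph{(a)}, we have $(1+|x-x^1|)^{N+2s-\eta}\le(1+|x-x^j|)^{N+2s-\eta}$, hence, using also $1+|x-x^j|\ge\frac12 d_j$ from \emph{(b)},
\[
U_{x^j}(x)\le\frac{C}{(1+|x-x^j|)^{N+2s}}\le\frac{C}{(1+|x-x^1|)^{N+2s-\eta}(1+|x-x^j|)^{\eta}}\le\frac{2^{\eta}C}{(1+|x-x^1|)^{N+2s-\eta}}\cdot\frac{1}{d_j^{\eta}}.
\]
Summing over $j$ is then routine: $x^j$ lies at angle $2(j-1)\pi/k$ on the circle of radius $r$, so $d_j=2r\sin\frac{(j-1)\pi}{k}=2r\sin\frac{\ell_j\pi}{k}\ge\frac{4r}{k}\ell_j$ with $\ell_j=\min\{j-1,k-j+1\}\le k/2$ (using $\sin t\ge 2t/\pi$ on $[0,\pi/2]$), and as $j$ runs over $2,\dots,k$ each value of $\ell_j$ occurs at most twice, so
\[
\sum_{j=2}^{k}\frac{1}{d_j^{\eta}}\le\Big(\frac{k}{4r}\Big)^{\eta}\sum_{j=2}^{k}\frac{1}{\ell_j^{\eta}}\le 2\Big(\frac{k}{4r}\Big)^{\eta}\sum_{\ell=1}^{\infty}\frac{1}{\ell^{\eta}}\le C\Big(\frac{k}{r}\Big)^{\eta},
\]
the last series converging precisely because $\eta>1$. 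Combining the two displays and recalling $r=|x^1|$ gives $\sum_{j=2}^{k}U_{x^j}(x)\le C(1+|x-x^1|)^{-(N+2s-\eta)}k^{\eta}/|x^1|^{\eta}$; since $N+2s-\eta\ge0$ makes $(1+|x-x^1|)^{N+2s-\eta}\ge1$, the second, weaker bound follows as well.

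The hard part is the geometric step \emph{(a)}--\emph{(b)}: one must exploit the symmetric placement of the centers on the circle together with the defining inequality of $\Omega_1$ to know both that $x^1$ is the closest center and that every other center stays at distance comparable to $|x^1-x^j|$ from \emph{every} point of $\Omega_1$ — which is exactly what permits pulling the decay factor $(1+|x-x^1|)^{-(N+2s-\eta)}$ out front while keeping the summable weights $d_j^{-\eta}$. Everything else is bookkeeping. (Alternatively the middle pointwise inequality can be read off from Lemma~\ref{lmA.1} with $\sigma=\eta$, but that would force $\eta\le\min\{N+2s-\eta,\eta\}$, i.e. $\eta\le(N+2s)/2$; the two-case argument above avoids this restriction and handles the full range $\eta\in(1,N+2s]$.)
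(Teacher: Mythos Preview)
Your proof is correct and follows essentially the same route as the paper's: both establish the two geometric facts $|x-x^1|\le|x-x^j|$ and $|x-x^j|\ge\tfrac12|x^1-x^j|$ on $\Omega_1$, split the exponent $N+2s=(N+2s-\eta)+\eta$, and then sum $\sum_{j\ge2}|x^1-x^j|^{-\eta}$ via the explicit formula $|x^1-x^j|=2r\sin\frac{(j-1)\pi}{k}$. Your closing remark about why invoking Lemma~\ref{lmA.1} directly would unnecessarily restrict $\eta$ is a nice clarification not present in the paper.
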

\begin{proof}The proof of this lemma is similar to Lemma A.1 in
\cite{wy1}, we sketch the proof below for the sake of completeness.

For any $x\in \Omega_1$, we have for $i\neq 1$,
 $$
 |x-x^i|\geq|x-x^1|,\ \ \forall\
x\in\Omega_1,
$$
which gives $|x-x^i|\geq\frac{1}{2}|x^i-x^1|$ if
$|x-x^1|\geq\frac{1}{2}|x^i-x^1|$. On the other hand, if
$|x-x^1|\leq\frac{1}{2}|x^i-x^1|$, then
$$|x-x^i|\geq|x^i-x^1|-|x-x^1|\geq\frac{1}{2}|x^i-x^1|.$$ So, we
find $$|x-x^i|\geq\frac{1}{2}|x^i-x^1|,\ \ \forall\ x\in\Omega_1.$$
Thus,
\begin{equation*} \begin{split}
 \sum_{i=2}^kU_{x^i}&=
\sum_{i=2}^k\frac{C}{1+|x-x^i|^{N+2s}}=\sum_{i=2}^k\frac{C}{(1+|x-x^i|)^{N+2s}}\\
&=C
\sum_{i=2}^k\frac{1}{(1+|x-x^i|)^\eta}\frac{1}{(1+|x-x^i|)^{N+2s-\eta}}\\
&\leq
C\frac{1}{(1+|x-x^1|)^{N+2s-\eta}}\sum_{i=2}^k\frac{1}{|x^1-x^i|^\eta}.
\end{split}
\end{equation*}
Since
$$
|x^i-x^1|=2|x^1|\sin\frac{(i-1)\pi}{k},\ \ i=2,\cdots,k,
$$
we have
\begin{eqnarray*}
\sum_{i=2}^k\frac{1}{|x^i-x^1|^\eta}&=&\frac{1}{(2|x^1|)^\eta}\sum_{i=2}^k\frac{1}{(\sin\frac{(i-1)\pi}{k})^\eta}\\
& =& \left\{\begin{array}{ll}
        \frac{1}{(2|x^1|)^\eta}\sum_{i=2}^{\frac{k}{2}}\frac{1}{(\sin\frac{(i-1)\pi}{k})^\eta}+\frac{1}{(2|x^1|)^\eta}, & \text{if}\  k\ \text{is \ even},\vspace{2mm}\\
       \frac{1}{(2|x^1|)^\eta}\sum_{i=2}^{[\frac{k}{2}]}\frac{1}{(\sin\frac{(i-1)\pi}{k})^\eta}, & \text{if}\ k\ \text{is \ odd}.
    \end{array}\right.
\end{eqnarray*}

But
$$
0<c'\leq\frac{\sin\frac{(i-1)\pi}{k}}{\frac{(i-1)\pi}{k}}\leq C'',\
\ j=2,\cdots,[\frac{k}{2}].
$$
So, there is a constant $B>0$, such that
$$
\sum_{i=2}^k\frac{1}{|x^i-x^1|^\eta}=\frac{B
k^\eta}{|x^1|^\eta}+O(\frac{k}{|x^1|^\eta}).
$$
Thus, we obtain
$$
\sum_{i=2}^kU_{x^i}\leq
C\frac{1}{(1+|x-x^1|)^{N+2s-\eta}}\frac{k^\eta}{|x^1|^\eta}\leq
C\frac{k^\eta}{|x^1|^\eta}.
$$
\end{proof}

\begin{proposition}\label{propA.1} There is a small constant $\tau >0$, such that
\begin{equation*}\begin{split}
I(U_r)=&k\Big(A-\frac12\sum_{j=2}^k\frac{\tilde B_0}{|x^1-x^j|^{N+2s}}+\frac{B_1}{r^m}+O\Big(\frac{1}{r^{m+\tau}}+\Big(\frac{k}{r}\Big)^{N+2s+\tau}\\
&+\sum_{j=2}^k\frac{B_0}{|x^1-x^j|^{N+2s+\tau}}\Big)\Big)\\
=&k\Big(A-\frac{B_0
k^{N+2s}}{r^{N+2s}}+\frac{B_1}{r^m}+O\Big(\frac{1}{r^{m+\tau}}\Big)\Big),
\end{split}
\end{equation*}
 where
$A=(\frac{1}{2}-\frac{1}{p})\int_{\r^N}U^{p}$, and $B_0,\ B_1>0$ are
positive constants.
\end{proposition}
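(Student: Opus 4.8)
The plan is to evaluate $I(U_r)$ term by term and, by the $k$-fold rotational symmetry of the configuration $\{x^i\}$, to reduce every quantity to an integral involving only $U_{x^1}$ and one further bump. I would start from $I(U_r)=\frac12\langle U_r,U_r\rangle-\frac1{p+1}\int_{\r^N}U_r^{p+1}-\frac1{p+1}\int_{\r^N}(K(x)-1)U_r^{p+1}$. Since each $U_{x^i}$ solves \eqref{eq1}, testing that equation against an arbitrary $v\in H^s(\r^N)$ yields $\langle U_{x^i},v\rangle=\int_{\r^N}U_{x^i}^pv$, so $\frac12\langle U_r,U_r\rangle=\frac12\sum_{i,j}\int_{\r^N}U_{x^i}^pU_{x^j}=\frac k2\int_{\r^N}U^{p+1}+\frac12\sum_{i\ne j}\int_{\r^N}U_{x^i}^pU_{x^j}$. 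For the pure power I would use the pointwise identity $\big(\sum_iU_{x^i}\big)^{p+1}=\sum_iU_{x^i}^{p+1}+(p+1)\sum_{i\ne j}U_{x^i}^pU_{x^j}+\mathcal R$, where $\mathcal R$ is nonnegative and, on each $\Omega_j$, dominated by $C\big(U_{x^j}^{p-1}(\sum_{i\ne j}U_{x^i})^2+(\sum_{i\ne j}U_{x^i})^{p+1}\big)$ (for $1<p<2$ the power $t\mapsto t^{p+1}$ is only $C^1$, so this convexity-type bound replaces the binomial expansion). Combining the quadratic piece with $-\frac1{p+1}\int_{\r^N}U_r^{p+1}$, the diagonal terms collapse to $k\big(\frac12-\frac1{p+1}\big)\int_{\r^N}U^{p+1}=:kA$ and the leading cross terms to $-\frac k2\sum_{j=2}^k\int_{\r^N}U_{x^1}^pU_{x^j}$.

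The next step is to isolate the interaction. Using the sharp asymptotics $U(x)=\kappa|x|^{-(N+2s)}+O(|x|^{-(N+2s)-\tau})$ as $|x|\to\infty$ --- available from \cite{fl,fls}, and the one place where more than the two-sided bound of Theorem \ref{th2.2} is required --- I would write $\int_{\r^N}U_{x^1}^pU_{x^j}=\int_{\r^N}U^p(y)\,U(y+x^1-x^j)\,dy$, split the $y$-integral at $|y|=\frac12|x^1-x^j|$, replace $U(y+x^1-x^j)$ by $\kappa|x^1-x^j|^{-(N+2s)}$ on the inner region (the errors being governed by a moment of $U^p$ and the factor $|y|/|x^1-x^j|$, with the net gain coming from $(N+2s)p>N$), and on the outer region use that $U^{p-1}=O(|x^1-x^j|^{-(N+2s)(p-1)})$ there together with $\int_{\r^N}U(y)U(y+a)\,dy=O(|a|^{-(N+2s)})$ (Lemma \ref{lmA.1} with both exponents $N+2s$ and $\sigma=N+2s$). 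This gives $\int_{\r^N}U_{x^1}^pU_{x^j}=\tilde B_0|x^1-x^j|^{-(N+2s)}+O(|x^1-x^j|^{-(N+2s)-\tau})$ with $\tilde B_0:=\kappa\int_{\r^N}U^p>0$. The remainder $\int_{\r^N}\mathcal R$ I would bound by Lemma \ref{lmA.2}: choosing the free exponent $\eta\in(1,N+2s]$ slightly above $\frac{N+2s}{2}$ makes both $\int_{\r^N}U_{x^1}^{p-1}(\sum_{i\ne1}U_{x^i})^2$ and $\int_{\r^N}(\sum_{i\ne1}U_{x^i})^{p+1}$ of size $O\big((\tfrac kr)^{N+2s+\tau}\big)$, so $\int_{\r^N}\mathcal R=O\big(k(\tfrac kr)^{N+2s+\tau}\big)$, which is one of the admissible error terms.

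For the potential part I would replace $\int_{\r^N}(K(x)-1)U_r^{p+1}$ by $k\int_{\r^N}(K(x)-1)U_{x^1}^{p+1}$ up to cross contributions handled exactly as above (now $|K-1|$ is bounded, which only helps), and then split at $|x-x^1|=\frac r2$. On $\r^N\setminus B_{r/2}(x^1)$ one has $\int_{|y|\ge r/2}U^{p+1}=O\big(r^{-((N+2s)(p+1)-N)}\big)=O(r^{-m-\tau})$, because $(N+2s)(p+1)-N>2(N+2s)-N=N+4s>N+2s>m$. On $B_{r/2}(x^1)$ one has $|x|=r+O(|x-x^1|)$, so assumption $(K)$ and a first-order Taylor expansion of $t\mapsto t^{-m}$ at $t=r$ give $K(x)-1=-ar^{-m}+O\big(|x-x^1|\,r^{-m-1}\big)+O(r^{-m-\theta})$; integrating against $U_{x^1}^{p+1}$ and using $\int_{\r^N}|y|\,U^{p+1}(y)\,dy<\infty$ (valid since $(N+2s)(p+1)>N+1$) yields $\int_{\r^N}(K(x)-1)U_{x^1}^{p+1}=-ar^{-m}\int_{\r^N}U^{p+1}+O(r^{-m-\tau})$, hence $-\frac1{p+1}\int_{\r^N}(K(x)-1)U_r^{p+1}=\frac{kB_1}{r^m}+O(kr^{-m-\tau})$ with $B_1:=\frac a{p+1}\int_{\r^N}U^{p+1}>0$. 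Collecting the three contributions gives the first displayed identity of the proposition.

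To pass to the second line I would evaluate the lattice sum: since $|x^1-x^j|=2r\sin\frac{(j-1)\pi}{k}$, the computation in the proof of Lemma \ref{lmA.2} with $\eta=N+2s$ gives $\sum_{j=2}^k|x^1-x^j|^{-(N+2s)}=\hat B\,k^{N+2s}r^{-(N+2s)}+O\big(kr^{-(N+2s)}\big)$ with $\hat B>0$, and likewise $\sum_{j=2}^k|x^1-x^j|^{-(N+2s)-\tau}=O\big(k^{N+2s+\tau}r^{-(N+2s)-\tau}\big)$. Because $r\in S_k$ forces $k^{N+2s}r^{-(N+2s)}\asymp r^{-m}$ while $k/r\to0$ at a polynomial rate, each of the errors $kr^{-(N+2s)}$, $(k/r)^{N+2s+\tau}$ and $k^{N+2s+\tau}r^{-(N+2s)-\tau}$ is bounded by $Cr^{-m-\tau'}$ for some $\tau'>0$, so after shrinking $\tau$ the whole error collapses into $O(r^{-m-\tau})$; with $B_0:=\frac12\tilde B_0\hat B>0$ this produces $I(U_r)=k\big(A-B_0k^{N+2s}r^{-(N+2s)}+B_1r^{-m}+O(r^{-m-\tau})\big)$. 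The main obstacle is precisely the cross-term bookkeeping, especially for $1<p<2$: one must check, using Lemmas \ref{lmA.1}--\ref{lmA.2} and the merely polynomial decay of $U$, that the convexity-type bound for $\mathcal R$ (and all of its subterms) really does beat the leading interaction $|x^i-x^j|^{-(N+2s)}$ by a positive power, which is where the admissible ranges of $p$, $m$, $N$, $s$ enter most delicately; a secondary point is that pinning down the single constant $\tilde B_0$ --- rather than just two-sided bounds for $\int_{\r^N}U_{x^1}^pU_{x^j}$ --- relies on the refined decay asymptotics of $U$ from \cite{fl,fls}.
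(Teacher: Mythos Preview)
Your outline is correct and follows the same overall architecture as the paper: split $I(U_r)$ into the quadratic form, the pure power, and the $(K-1)$-piece; use the $k$-fold symmetry to reduce to $\Omega_1$; identify the leading interaction $\int U_{x^1}^pU_{x^j}$; and finally sum the lattice series $\sum_{j\ge 2}|x^1-x^j|^{-(N+2s)}$ via the computation in Lemma~\ref{lmA.2}. Two points of genuine divergence are worth noting.

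\emph{Interaction constant.} You extract an \emph{exact} constant $\tilde B_0=\kappa\int_{\r^N}U^p$ by invoking the refined asymptotics $U(x)=\kappa|x|^{-(N+2s)}+O(|x|^{-(N+2s)-\tau})$. The paper deliberately avoids this: it works only with the two-sided bounds of Theorem~\ref{th2.2}(ii), bounding $\sum_{j\ge 2}\int U_{x^1}^pU_{x^j}$ from above and from below by constant multiples of $\sum_{j\ge 2}|x^1-x^j|^{-(N+2s)}$, and then writes the quotient as a quantity $B_0'\in[C_2,C_1]$ which may in principle depend on $k$. Your route yields a genuinely $k$-independent $\tilde B_0$ and matches the statement of the proposition more cleanly; the paper's route is more elementary but gives a slightly weaker object (which is, however, all that is needed in the proof of Theorem~\ref{th3}). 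If you keep your version, make sure the rate $\tau>0$ in the asymptotics of $U$ is actually available from \cite{fls}; the limit $\lim_{|x|\to\infty}|x|^{N+2s}U(x)$ is standard, but a polynomial convergence rate requires a short extra argument (e.g.\ via the Green's function representation).

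\emph{Remainder for $1<p<2$.} The paper controls the Taylor remainder on $\Omega_1$ by $O\bigl(\int_{\Omega_1}U_{x^1}^{(p+1)/2}(\sum_{i\ge 2}U_{x^i})^{(p+1)/2}\bigr)$ rather than by your $U_{x^1}^{p-1}(\sum_{i\ge 2}U_{x^i})^2+(\sum_{i\ge 2}U_{x^i})^{p+1}$. Both are legitimate since $t\mapsto t^{p+1}$ is $C^2$; the paper's single symmetric term is slightly simpler to integrate (one application of Lemma~\ref{lmA.2} with $\kappa$ chosen so that $\frac{p+1}{2}(N+2s-\kappa)>N+2s$ suffices), whereas your second term $(\sum_{i\ge 2}U_{x^i})^{p+1}$ has no $U_{x^1}$ factor and needs Lemma~\ref{lmA.2} with $\eta$ chosen so that $(p+1)(N+2s-\eta)>N$ as well as $(p+1)\eta>N+2s$, which is a compatible but slightly more delicate window.

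Everything else---the splitting of the $(K-1)$-integral at $|x-x^1|=r/2$, the Taylor expansion of $|x|^{-m}$, and the final absorption of all errors into $O(r^{-m-\tau})$ using $r\in S_k$---matches the paper's computation essentially line for line.
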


\begin{proof}
 Using the symmetry,

\begin{equation}\label{A.1} \begin{split}
\langle U_r,U_r\rangle_s+\langle U_r,U_r\rangle_{L^2(\r^N)}&=\sik\sum_{j=1}^{k}\int_{\r^N}U_{x^j}^{p}U_{x^i}\\
&=k\Bigl(\int_{\r^N}U_{x^1}^{p+1}+\sum_{j=2}^{k}\int_{\r^N}U_{x^1}^{p}U_{x^j}\Bigr)\\
 &=k \int_{\r^N}U^{p+1}+k \sum_{i=2}^{k}\int_{\r^N}U_{x^1}^{p}U_{x^i}.\\
\end{split}
\end{equation}
It follows from Lemma \ref{lmA.1} that
\begin{equation*}\label{A.2} \begin{split}
&\sum_{i=2}^k\int_{\r^N}U_{x^1}^pU_{x^i}=C\int_{\r^N}\Big(\frac{1}{1+|x-x^1|^{N+2s}}\Big)^p\sum_{i=2}^k\frac{1}{1+|x-x^i|^{N+2s}}\\
&\le C
\sum_{i=2}^k\frac{1}{|x^1-x^i|^{N+2s}}\int_{\r^N}\frac{1}{(1+|x-x^1|)^{(N+2s)p}}+O\Big(\sum_{i=2}^k\frac{1}{|x^1-x^i|^{N+2s+\tau}}\Big)\\
&=\sum_{i=2}^k\frac{C_1}{|x^1-x^i|^{N+2s}}+O\Big(\sum_{i=2}^k\frac{1}{|x^1-x^i|^{N+2s+\tau}}\Big).
\end{split}
\end{equation*}
However,
\begin{eqnarray*}
\sum_{i=2}^k\int_{\r^N}U_{x^1}^pU_{x^i}&=&\int_{\r^N}\Big(\frac{1}{1+|x-x^1|^{N+2s}}\Big)^p\sum_{i=2}^k\frac{1}{1+|x-x^i|^{N+2s}}\\
&\ge&
\sum_{i=2}^k\int_{B_{\frac{|x^1-x^i|}{2}}(x^1)}\Big(\frac{1}{1+|x-x^1|^{N+2s}}\Big)^p\frac{1}{1+|x-x^i|^{N+2s}}\\
&&+\sum_{i=2}^k
\int_{B_{\frac{|x^1-x^i|}{2}}(x^i)}\Big(\frac{1}{1+|x-x^1|^{N+2s}}\Big)^p\frac{1}{1+|x-x^i|^{N+2s}}\\
&\ge&\sum_{i=2}^k\frac{C_2}{|x^1-x^i|^{N+2s}}+O\Big(\sum_{i=2}^k\frac{1}{|x^1-x^i|^{N+2s+\tau}}\Big).
\end{eqnarray*}

Hence, there exists $B_0'$ (which maybe depend on $k$) in $[C_2,\,\,C_1]$, where $C_1$ and $C_2$ are independent of $k$, such that
\begin{equation}\label{A.2} \begin{split}
&\sum_{i=2}^k\int_{\r^N}U_{x^1}^pU_{x^i}=\sum_{i=2}^k\frac{B_0'}{|x^1-x^i|^{N+2s}}+O\Big(\sum_{i=2}^k\frac{1}{|x^1-x^i|^{N+2s+\tau}}\Big).
\end{split}
\end{equation}

Now, by symmetry, we see
\begin{eqnarray*}
\int_{\r^N}K(x)U_r^{p+1}&=&k\int_{\Omega_1}K(x)U_{x^1}^{p+1}+k(p+1)\int_{\Omega_1}K(x)\sum_{i=2}^kU_{x^1}^pU_{x^i}\\
&& +k\left\{\begin{array}{ll}
         O\Big(\ds\int_{\Omega_1}U_{x^1}^{\frac{p+1}{2}}(\sum_{i=2}^kU_{x^i})^{\frac{p+1}{2}}\Big), & \text{if}\  1<p<2,\vspace{3mm}\\
       O\Big(\ds\int_{\Omega_1}U_{x^1}^{p-1}(\sum_{i=2}^kU_{x^i})^2\Big), & \text{if}\ p\geq2.
    \end{array}\right.
\end{eqnarray*}

For $x\in \Omega_1$, we have $|x-x^i|\geq \frac12|x^i-x^1|$. By Lemma \ref{lmA.1},
\begin{equation*}\begin{split}
\sum_{i=2}^kU_{x^i}&\leq
C\sum_{i=2}^k\frac{1}{(1+|x-x^1|)^{N+2s-\kappa}}\frac{1}{(1+|x-x^i|)^{\kappa}}\\
&\leq C\sum_{i=2}^k
\frac{1}{|x^1-x^i|^{N+2s-\kappa}}\Big(\frac{1}{(1+|x-x^1|)^{\kappa}}+\frac{1}{(1+|x-x^i|)^{\kappa}}\Big)\\
 &\leq
C\sum_{i=2}^k
\frac{1}{|x^1-x^i|^{N+2s-\kappa}}\frac{1}{(1+|x-x^1|)^{\kappa}},\\
\end{split}
\end{equation*}
where $\kappa>0$ satisfies $\min\{\frac{p+1}{2}(N+2s-\kappa),\,2(N+2s-\kappa)\}>N+2s$.
Hence, we get
\begin{eqnarray*}
&&\int_{\Omega_1}U_{x^1}^{\frac{p+1}{2}}(\sum_{i=2}^kU_{x^i})^{\frac{p+1}{2}}\\
&\leq&C\int_{\Omega_1}\frac{1}{(1+|x-x^1|)^{\frac{(N+2s)(p+1)}{2}}}\Big(\sum_{i=2}^k\frac{1}{|x^i-x^1|^{N+2s-\kappa}}\Big)^\frac{p+1}{2}
\frac{1}{(1+|x-x^1|)^{\frac{p+1}{2}\kappa}}\\
&=&C\Big(\sum_{i=2}^k\frac{1}{|x^i-x^1|^{N+2s-\kappa}}\Big)^\frac{p+1}{2}\int_{\Omega_1}\frac{1}{(1+|x-x^1|)^{\frac{(N+2s)(p+1)}{2}+\kappa}}\\
&\le&C\Big(\sum_{i=2}^k\frac{1}{|x^i-x^1|^{N+2s-\kappa}}\Big)^\frac{p+1}{2}\le C\Big(\frac{k}{r}\Big)^{N+2s+\tau}
\end{eqnarray*}
and
\begin{eqnarray*}
&&\int_{\Omega_1}U_{x^1}^{p-1}(\sum_{i=2}^kU_{x^i})^2\\
&\leq&C\int_{\Omega_1}\frac{1}{(1+|x-x^1|)^{(N+2s)(p-1)}}\Big(\sum_{i=2}^k\frac{1}{|x^i-x^1|^{N+2s-\kappa}}\Big)^2\frac{1}{(1+|x-x^1|)^{2\kappa}}\\
&=&C\Big(\sum_{i=2}^k\frac{1}{|x^i-x^1|^{N+2s-\kappa}}\Big)^2\int_{\Omega_1}\frac{1}{(1+|x-x^1|)^{(N+2s)(p-1)+2\kappa}}\\
&=&C\Big(\sum_{i=2}^k\frac{1}{|x^i-x^1|^{N+2s-\kappa}}\Big)^2\le  C\Big(\frac{k}{r}\Big)^{N+2s+\tau}.
\end{eqnarray*}
On the other hand,
\begin{equation*}\begin{split}
\int_{\Omega_1}K(x)U_{x^1}^p\sum_{i=2}^kU_{x^i}&=\int_{\Omega_1}U_{x^1}^p\sum_{i=2}^kU_{x^i}+\int_{\Omega_1}(K(x)-1)U_{x^1}^p\sum_{i=2}^kU_{x^i}.
\end{split}
\end{equation*}
But, from Lemma~\ref{lmA.1} and \eqref{A.2},
\begin{eqnarray*}
&&\int_{\Omega_1}U_{x^1}^p\sum_{i=2}^kU_{x^i}=\int_{\r^N}U_{x^1}^p\sum_{i=2}^kU_{x^i}-\int_{\r^N\setminus\Omega_1}U_{x^1}^p\sum_{i=2}^kU_{x^i}\\
&=&\int_{\r^N}U_{x^1}^p\sum_{i=2}^kU_{x^i}
+O\Big(\Big(\frac{k}{r}\Big)^\sigma\int_{\r^N\setminus\Omega_1}U_{x^1}^{p-\sigma}\sum_{i=2}^kU_{x^i}\Big)\\
&=&\int_{\r^N}U_{x^1}^p\sum_{i=2}^kU_{x^i}+O\Big(\Big(\frac{k}{r}\Big)^\sigma\sum_{i=2}^k\frac{1}{|x^i-x^1|^{N+2s}}
\int_{\r^N\setminus\Omega_1}\Big(U_{x^1}^{p-\sigma}+U^{p-\sigma}_{x^i}\Big)\Big)\\
&=&\sum_{i=2}^k\frac{B_0'}{|x^1-x^i|^{N+2s}}+O\Big(\Big(\frac{k}{r}\Big)^{N+2s+\tau}\Big),
\end{eqnarray*}
where $\sigma>0$ satisfies $p-\sigma>1$.

Moreover, similarly,
\begin{eqnarray*}
&&\int_{\Omega_1}|K(x)-1|U_{x^1}^p\sum_{i=2}^kU_{x^i}\\
&=&\int_{\r^N}|K(x)-1|U_{x^1}^p\sum_{i=2}^kU_{x^i}
-\int_{\r^N\setminus\Omega_1}|K(x)-1|U_{x^1}^p\sum_{i=2}^kU_{x^i}\\
&\le&\int_{B_{\frac{r}{2}}(x^1)}|K(x)-1|U_{x^1}^p\sum_{i=2}^kU_{x^i}+C\int_{\r^N\setminus B_{\frac{r}{2}}(x^1)}U_{x^1}^p\sum_{i=2}^kU_{x^i}
+O\Big(\Big(\frac{k}{r}\Big)^{N+2s+\tau}\Big)\\
&\le&\frac{C}{r^m}\int_{\r^N}U_{x^1}^p\sum_{i=2}^kU_{x^i}+O\Big(\frac{1}{r^\sigma}\sum_{i=2}^k\frac{1}{|x^i-x^1|^{N+2s}}
\int_{\r^N\setminus B_{\frac{r}{2}}(x^1)}\Big(U_{x^1}^{p-\sigma}+U^{p-\sigma}_{x^i}\Big)\Big)\\
&&+O\Big(\Big(\frac{k}{r}\Big)^{N+2s+\tau}\Big)\\
&=&O\Big(\Big(\frac{k}{r}\Big)^{N+2s+\tau}+\frac{1}{r^{m+\tau}}\Big).
\end{eqnarray*}
Hence,
\begin{equation*}\begin{split}
\int_{\Omega_1}K(x)U_{x^1}^p\sum_{i=2}^kU_{x^i}
&=\sum_{j=2}^k\frac{B'_0}{|x^1-x^i|^{N+2s}}+O\Big(\Big(\frac{k}{r}\Big)^{N+2s+\tau}+\frac{1}{r^{m+\tau}}\Big).
\end{split}
\end{equation*}

Finally,
\begin{eqnarray*}
&&\int_{\Omega_1}K(x)U_{x^1}^{p+1}\\
&=&\int_{\r^N}K(x)U_{x^1}^{p+1}-\int_{\r^N\setminus B_{\frac{2\pi r}{k}}(x^1)}K(x)U_{x^1}^{p+1}+\int_{\Omega_1\setminus B_{\frac{2\pi r}{k}}(x^1)}K(x)U_{x^1}^{p+1}\\
&=&\int_{\r^N}K(x)U_{x^1}^{p+1}+O\Big(\int_{\r^N\setminus B_{\frac{2\pi r}{k}}(x^1)}K(x)U_{x^1}^{p+1}\Big)\\
&=&\int_{B_{\frac{r}{2}}(x^1)}K(x)U_{x^1}^{p+1}+\int_{\r^N\setminus B_{\frac{r}2}(x^1)}K(x)U_{x^1}^{p+1}+O\Big(\int_{\r^N\setminus B_{\frac{2\pi r}{k}}(x^1)}K(x)U_{x^1}^{p+1}\Big)\\
&=&\int_{\r^N}U^{p+1}-\frac{B'_1}{r^m}+O\Big(\frac{1}{r^{m+\tau}}\Big)+O\Big(\Big(\frac{k}{r}\Big)^{(N+2s)(p+1)-N}\Bigl)\\
&=& \int_{\r^N}U^{p+1}-\frac{B'_1}{r^m}+O\Big(\frac{1}{r^{m+\tau}}+\Big(\frac{k}{r}\Big)^{N+2s+\tau}\Big)
\end{eqnarray*}
since $(N+2s)(p+1)-N>N+2s$.

So, we have proved
\begin{equation}\label{A.3}
\int_{\r^N}K(x)U_r^{p+1}=k\Big(\int_{\r^N}U^{p+1}+\sum_{j=2}^k\frac{B'_0}{|x^1-x^i|^{N+2s}}-\frac{B'_1}{r^m}
+O\Big(\frac{1}{r^{m+\tau}}+\Big(\frac{k}{r}\Big)^{N+2s+\tau}\Big)\Big).
\end{equation}
Now, inserting \eqref{A.1}--\eqref{A.3} into $I(U_r)$, we complete
the proof.
\end{proof}

\noindent{\bf Acknowledgements:}\,\,\, S. Peng was partially
supported by the fund from NSFC(11125101). W. Long was partially
supported by the NSF of Jiangxi Province (20132BAB211004). J. Yang
was partially supported by the the excellent doctorial dissertation
cultivation grant from Central China Normal University (2013YBZD15).


\begin{thebibliography}{99}
{\footnotesize

\bibitem{abfs}L. Abdelouhab, J. L. Bona, M. Felland, J.-C. Saut, Nonlocal
models for nonlinear, dispersive waves, Phys. D, 40 (1989) 360--392.

\bibitem{AW} W. Ao, J. Wei, Infinitely many positive solutions for nonlinear equations with non-symmetric potentials, preprint.

\bibitem{ct}X. Cabr\'{e}, J. Tan, Positive solutions of nonlinear problems
involving the square root of the Laplacian, Adv. Math., 224 (2010)
2052--2093.

\bibitem{cs}L. Caffarelli, L. Silvestre, An extension problem related
to the fractional Laplacian, Comm. Partial Differential Equations,
32(7--9) (2007) 1245--1260.

\bibitem{cdls} A. Capella, J. D\'{a}vila, L. Dupaigne and Y. Sire. Regularity of
radial extremal solutions for some non-local semilinear equations,
Comm. Partial Differential Equations, 36 (2011) 1353--1384.

\bibitem{cds} G. Cerami, G. Devillanova, S. Solimini, Infinitely many bound
states for some nonlinear scalar field equations, Calc. Var. Partial
Differential  Equations, 23 (2005) 139--168.

\bibitem{cp} G. Cerami, D. Passaseo, Infinitely many positive solutions to some scalar field equation with non-symmetric coefficients, to appear in Comm. Pure Appl. Math..


\bibitem{cgnt}S.-M. Chang, S. Gustafson, K. Nakanishi, T.-P. Tsai, Spectra of
linearized operators for NLS solitary waves, SIAM. J. Math. Anal., 39
(2007/08) 1070--1111.

\bibitem{clo}W. Chen, C. Li,  B. Ou, Classification of solutions for an
integral equation, Comm. Pure Appl. Math., 59 (2006) 330--343.

\bibitem{cz} G. Chen, Y. Zhang, Concentration phenomenon for
fractionsl nonlinear Schr\"{o}dinger equations, arXiv: 1305.4426.



\bibitem{dpw} J. D\'{a}vila, M. Del Pino, J. Wei, Concentrating
standing waves for fractional nonlinear Schr\"{o}dinger equation, J.
Differerntial Equations, 256 (2014) 858--892.

\bibitem{ds} G. Devillanova, S. Solimini, Min-max solutions to some scalar field equations, preprint.

\bibitem{dn}  W. Ding, W.M. Ni, On the existence of positive entire
solutions of a semilinear elliptic equation,  Arch. Ration. Mech.
Anal., 91 (1986) 283--308.

\bibitem{fqt} P. Felmer, A. Quass, J. Tan, Positive solutions of
nonlinear Schr\"{o}dinger equation with the fractional Laplacian,
Proc. Roy. Soc. Edinburgh Sect., A 142(6) (2012) 1237--1262.

\bibitem{fl} Rupert Frank, Enno Lenzmann, Uniqueness and nondegeneracy of
ground states for $(-\Delta)^sQ +Q-Q^{\alpha+1} = 0$ in $\r$, Acta
Math., 210 (2013) 261--318.

\bibitem{fls} Rupert Frank, Enno Lenzmann, and Luis Silvestre, Uniqueness of
radial solutions for the fractional Laplacian, arXiv preprint
arXiv:1302.2652 2013.

\bibitem{es}A. Elgart, B. Schlein, Mean field dynamics of boson stars, Comm.
Pure Appl. Math., 60 (2007) 500--545.

\bibitem{k}M. K. Kwong, Uniqueness of positive solutions of $\Delta u-u+u^p = 0$ in
$\r^n$, Arch. Ration. Mech. Anal., 105(3) (1989)243--266.

\bibitem{mmt}A. J. Majda, D. W. McLaughlin, E. G. Tabak, A one-dimensional model
for dispersive wave turbulence, J. Nonlinear Sci., 7 (1997) 9--44.

\bibitem{l1} N. Laskin, Fractional quantum mechanics and L\'{e}vy path
integrals, Phys. Lett. A 268 (2000) 29--305.

\bibitem{l2} N. Laskin, Fractional Schr\"{o}dinger equation, Phys.
Rev., E 66 (2002) 31--35.


\bibitem{ly} E. H. Lieb, H.-T. Yau, The Chandrasekhar theory of stellar collapse
as the limit of quantum mechanics, Comm. Math. Phys., 112 (1987)
147--174.

\bibitem{pl1} P.L. Lions, The concentration-compactness principle in the calculus
of variations. The locally compact case. I. Ann. Inst. H. Poincar\'{e}
Anal. Non Lineaire, 1 (1984) 109--145 .

\bibitem{pl2} P.L. Lions, The
concentration-compactness principle in the calculus of variations.
The locally compact case. II. Ann. Inst. H. Poincar\'{e} Anal. Non
Lineaire, 1 (1984) 223--283.



\bibitem{npv}E. Di Nezza, G. Palatucci,  E. Valdinoci,
Hitchhiker's guide to the fractional Sobolev spaces, Bull. Sci.
Math., 136 (2012) 521--573.

\bibitem{se1} S. Secchi, Ground state solutions for nonlinear fractional Schr?dinger equations in $\r^N$,
 J. Math. Phys., 54 (2013),  031501, 17 pp.


\bibitem{sv}Y. Sire, E. Valdinoci, Fractional Laplacian phase transitions
and boundary reactions: a geometric inequality and a symmetry
result, J. Funct. Anal., 256 (2009) 1842--1864.

\bibitem{ta} J. Tan,  The Brezis-Nirenberg type problem involving the square root of the Laplacian,
 Calc. Var. Partial Differential Equations, 42 (2011), 21-41.


\bibitem{wy1}J. Wei, S. Yan, Infinite many positive solutions for the nonlinear $Schr\ddot{o}dinger$ equation in $\r^n$, Calc. Var. Partial Differential Equations, 37 (2010) 423--439.

\bibitem{wy2}J. Wei, S. Yan, Infinite many positive solutions for
the prescribed scalar curvature problem on $\mathbb{S}^N$, J. Funct.
Anal., 258 (2010) 3048--3081.


\bibitem{w}M. I. Weinstein, Existence and dynamic stability of solitary wave
solutions of equations arising in long wave propagation, Comm.
Partial Differential Equations, 12(1987) 1133--1173. }
\end{thebibliography}
\end{document}